\documentclass[preprint]{article}

\usepackage{amsmath,amssymb,amsthm,
verbatim,
}
\usepackage[utf8]{inputenc}

\usepackage{tikz}
\usetikzlibrary{shapes.geometric,calc,decorations.markings,math}

\tikzstyle{nodo}=[circle,draw,fill,inner sep=0pt,minimum size=%
1.5mm]
\tikzstyle{infinito}=[circle,inner sep=0pt,minimum size=0mm]

\usepackage{geometry}
\geometry{hmargin={3.3cm,3.3cm}}
\geometry{vmargin={5cm,3cm}}

\newcommand\R{{\mathbb R}}

\newcommand\Hmu{{H_\mu^1}}

\newcommand\f{\frac}

\newcommand{\sgn}{\text{sgn}}

\newtheorem{theorem}{Theorem}[section]
\newtheorem{proposition}[theorem]{Proposition}

\theoremstyle{remark}
\newtheorem{remark}[theorem]{Remark}
\newtheorem*{remark*}{Remark}

\theoremstyle{definition}

\theoremstyle{plain}
\newtheorem{thm}{Theorem}[section]

\newtheorem{lem}[thm]{Lemma}
\newtheorem{prop}[thm]{Proposition}

\theoremstyle{definition}

\theoremstyle{definition}

\theoremstyle{remark}

\tikzset{every loop/.style={min distance=10mm,in=300,out=240,looseness=10}}

\tikzset{place/.style={circle,thick,draw=blue!75,fill=blue!20,minimum
size=6mm}}
\tikzset{place2/.style={circle,thick,draw=red!75,fill=red!20,minimum
size=6mm}}

\newcommand{\deb}{\rightharpoonup}
\newcommand{\F}{\mathcal{F}}

\newcommand{\tbar}{\overline{t}}

\newcommand{\udot}{\|u'\|_2}
\newcommand{\uLtwo}{\|u\|_2}
\newcommand{\uinfty}{\|u\|_\infty}
\newcommand{\uLp}{\|u\|_p}
\newcommand{\uLsix}{\|u\|_6}
\newcommand{\DD}{\mathcal{D}}

\title{Prescribed mass ground states for a doubly nonlinear Schr\"odinger equation in dimension one}

%
%
%
%

\author{Filippo Boni$^{\dagger,\sharp}$\thanks{The first author acknowledges that the present research has been partially supported by MIUR grant Dipartimenti di Eccellenza 2018-2022 (E11G18000350001)}\, and  Simone Dovetta$^\ddagger$ \\
	\\
	$^\dagger$Dipartimento di Scienze Matematiche ``G.L. Lagrange'' \\
	Politecnico di Torino \\
	C.so Duca Degli Abruzzi 24, 10129 Torino, Italy \\ 
	\\ \ \\
	$^\sharp$Dipartimento di Matematica ``G. Peano''\\
	Università degli Studi di Torino 
\\	Via Carlo Alberto, 10, 10123, Torino, Italy\\
	\\
	filippo.boni@polito.it\\ \ \\
\\
$^\ddagger$Istituto di Matematica Applicata e Tecnologie Informatiche "E. Magenes"\\
Via Adolfo Ferrata, 1, 27100, Pavia, Italy\\
\\
simone.dovetta@imati.cnr.it
}

\date{\today}

\begin{document}


\maketitle

\begin{abstract}
	
	We investigate the problem of existence and uniqueness of ground states at fixed mass for two families of focusing nonlinear Schr\"odinger equations on the line. 
	
	The first family consists of NLS with power nonlinearities concentrated at a point. For such model, we prove existence and uniqueness of ground states at every mass when the nonlinearity power is $L^2-$subcritical and at a threshold value of the mass in the $L^2-$critical regime. 
	
	The second family is obtained by adding a standard power nonlinearity to the previous setting. In this case, we prove existence and uniqueness at every mass in the doubly subcritical case, namely when both the powers related to the pointwise and the standard nonlinearity are subcritical. If only one power is critical, then existence and uniqueness hold only at masses lower than the critical mass associated to the critical nonlinearity. Finally, in the doubly critical case ground states exist only at critical mass, whose value results from a non--trivial interplay between the two nonlinearities.
	
\end{abstract}

\section{Introduction}

In this paper we investigate a class of doubly nonlinear Schr\"odinger equations on the real line of the form
\begin{equation}
	\label{eq Delta}
	i\partial_t \psi(x,t)=-\partial_{xx}^2 \psi(x,t)-|\psi(x,t)|^{p-2}\psi(x,t)-|\psi(x,t)|^{q-2}\delta_0 \psi(x,t)
\end{equation}
with a focusing standard nonlinearity and a pointwise focusing nonlinearity located at the origin.

Through the decades, the use of the nonlinear Schr\"odinger equation as a mathematical tool to describe reality has rapidly become prominent and widespread in several areas of physics, from the theory of Bose--Einstein condensates \cite{DPS} to the propagation of laser beams \cite{GSD, PP}, from signal transmission in a neuronal network \cite{CM} to fluid dynamics \cite{L}. 
The interest in nonlinear Schr\"odinger equations with nonlinearity confined in a localized region of the space dates back to the early Nineties, mainly driven by the physical analysis of the dynamics of a quantum particle running through a barrier or some impurity in a medium (see for instance \cite{BKB,CH1, CH2, CH3,JPS,MA,MB,N,PJC,SKB,SKBRC} and references therein, as well as the monograph \cite{AGHKH}). Since then, well--posedness and global solutions have been studied both on the real line $\R$ \cite{AT,AT JFA} and in dimension three \cite{ADFT1,ADFT2} first, whereas recent investigations have been devoted to the two--dimensional case \cite{ACCT,ACCT2,CCT,CFT}. A rigorous derivation of the model with pointwise nonlinearity from the standard NLS equation can be found in \cite{CFNT1,CFNT2}. In the one--dimensional setting, a detailed blow--up analysis has been developed in \cite{HL1,HL2} for the model with concentrated nonlinearities, and the discussion of the interaction between a standard nonlinearity and a linear delta has been started in \cite{BV}, dealing with scattering issues. More recently, similar settings have been considered also in the case of quantum beating \cite{CFN17}, fractional Schr\"odinger equations \cite{CFiT} and on non--compact metric graphs (see \cite{DT-p,DT,serratentarelli2016,ST-NA,T-JMAA} and \cite{BCT1,BCT,BCT2,CCNP} for the nonlinear Dirac equation).

The present paper fits in this line of research. Specifically, we address existence and uniqueness of ground states of the energy functional $F_{p,q}:H^1(\R)\to\R$ associated to \eqref{eq Delta}
\begin{equation}
\label{F_intro}
F_{p,q}(u):=\frac{1}{2}\|u'\|_{L^2(\R)}^2-\frac{1}{p}\|u\|_{L^p(\R)}^p-\frac{1}{q}|u(0)|^q\,,
\end{equation}
i.e. global minimizers of $F_{p,q}$ among all functions $u\in H^1(\R)$ fulfilling the mass constraint
\[
\|u\|_{L^2(\R)}^2=\mu>0\,.
\]
In other words, we seek solutions of the problem
\begin{equation}
\label{infF_intro}
\F_{p,q}(\mu):=\inf_{u\in\Hmu(\R)}F_{p,q}(u)
\end{equation}
with
\[
\Hmu(\R):=\{\,u\in H^1(\R)\,:\,\|u\|_{L^2(\R)}^2=\mu\,\}\,.
\]
Ground states are solutions of the eigenvalue problem associated to \eqref{eq Delta}
\begin{equation}
\label{stat eq delta}
u''+|u|^{p-2}u+|u|^{q-2}\delta_0 u=\omega u\qquad\text{on }\R\,.
\end{equation}
Moreover, according to the standard theory of stability \cite{CL,GSS}, given any ground state $u$, it is well--known that the function $\psi(x,t)=e^{i\omega t}u(x)$ is an orbitally stable standing wave of \eqref{eq Delta}.

We begin by considering a simplified model, looking for ground states of the functional $D_q:H^1(\R)\to\R$
\begin{equation}
\label{D_intro}
D_q(u):=\f12\|u'\|_{L^2(\R)}^2-\f1q|u(0)|^q
\end{equation}
with a pointwise nonlinearity located at the origin only, that is we consider the minimization problem
\begin{equation}
\label{infD_intro}
\DD_q(\mu):=\inf_{u\in\Hmu(\R)}D_q(u)\,.
\end{equation}
The corresponding time--dependent NLS equation is then
\[
i\partial_t \psi(x,t)=-\partial_{xx}^2 \psi(x,t)-|\psi(x,t)|^{q-2}\delta_0 \psi(x,t)
\]
whose associated eigenvalue problem reads
\begin{equation}
	\label{Lstat delta}
	u''+|u|^{q-2}\delta_0 u=\omega u\qquad\text{on }\R\,.
\end{equation}
The first theorem identifies a subcritical regime $q\in(2,4)$, where ground states of $D_q$ exist and are unique at every value of the mass.
\begin{thm}
	\label{THM D 1} Let $2<q<4$. Then, for every $\mu>0$, 
	\begin{equation}
	\label{eq_THM D 1}
	-\infty<\DD_q(\mu)<0
	\end{equation}
	and there always exists a unique positive ground state $\chi_q\in\Hmu(\R)$ at mass $\mu$ given by
	\[
	\chi_q(x)=\left(\f\mu2\right)^{\f{q-2}{4-q}}\exp\left(-2^{-\f2{q-2}}\mu^{\f{q-2}{4-q}}|x|\right)\,,\qquad x\in\R\,.
	\]
\end{thm}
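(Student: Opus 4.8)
The plan is to reduce the whole statement to the sharp one--dimensional pointwise estimate
\begin{equation}
\label{sharp_Linfty}
\|u\|_\infty^2\le\|u\|_2\,\|u'\|_2\qquad\forall\,u\in H^1(\R),
\end{equation}
whose extremals are exactly the functions $e^{i\theta}c\,e^{-\beta|x-x_0|}$. First I would establish \eqref{sharp_Linfty} in the standard way, writing $|u(x_0)|^2=\pm\int_{x_0}^{\pm\infty}(|u|^2)'$ at a point $x_0$ of maximum of $|u|$ and bounding by Cauchy--Schwarz. Crucially, I would record the equality cases, which force $|u'|=\beta|u|$ a.e., a constant phase, and monotonicity of $|u|$ on each side of $x_0$, i.e. $u=e^{i\theta}c\,e^{-\beta|x-x_0|}$.

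Combining \eqref{sharp_Linfty} with $|u(0)|\le\|u\|_\infty$ and the mass constraint $\|u\|_2^2=\mu$ gives $|u(0)|^q\le\mu^{q/4}\|u'\|_2^{q/2}$, hence, setting $s:=\|u'\|_2$,
\[
D_q(u)\ge g(s):=\f12 s^2-\f1q\mu^{q/4}s^{q/2}\qquad\forall\,u\in\Hmu(\R).
\]
Since $2<q<4$ yields $1<q/2<2$, the one--variable function $g$ is coercive and bounded below on $[0,\infty)$, with a unique critical point $s_*>0$ which is its global minimum and satisfies $g(s_*)<0=g(0)$. This immediately gives $\DD_q(\mu)\ge g(s_*)>-\infty$, the left inequality in \eqref{eq_THM D 1}.

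For the remaining inequality and for attainment I would exhibit the extremal explicitly. Among the functions $u=c\,e^{-\beta|x|}$ centred at the origin equality holds throughout the chain above, so $D_q(u)=g(\|u'\|_2)$; imposing the mass constraint (which fixes $c$ in terms of $\beta$) and then choosing $\beta$ so that $\|u'\|_2=s_*$ — equivalently minimising the resulting expression in $\beta$ — singles out one profile, which a direct computation identifies with $\chi_q$. In particular $\DD_q(\mu)=g(s_*)<0$, proving the right inequality in \eqref{eq_THM D 1} and the existence of a ground state.

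Uniqueness then follows by running the lower bound backwards. If $u\in\Hmu(\R)$ satisfies $D_q(u)=g(s_*)$, both inequalities $D_q(u)\ge g(s)\ge g(s_*)$ must be equalities: the second forces $\|u'\|_2=s_*$ by uniqueness of the minimiser of $g$ (note $\|u'\|_2=0$ is excluded since $\mu>0$), while the first forces equality in \eqref{sharp_Linfty} together with $|u(0)|=\|u\|_\infty$. By the rigidity recorded above this means $u=e^{i\theta}c\,e^{-\beta|x|}$ with maximum at the origin, and the constraints $\|u\|_2^2=\mu$ and $\|u'\|_2=s_*$ determine $c,\beta>0$ uniquely; the real positive representative is exactly $\chi_q$. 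The main obstacle I anticipate is precisely this rigidity step: \eqref{sharp_Linfty} itself is elementary, but extracting from its equality case the full conclusion that $u$ is a single exponential centred at $0$ — controlling both the phase and the location of the maximum — requires care, and it is this step that simultaneously delivers the explicit form of $\chi_q$ and its uniqueness.
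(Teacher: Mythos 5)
Your argument is correct, but it follows a genuinely different route from the paper's. The paper proves existence by the direct method --- coercivity from \eqref{GNinf}, strict negativity of the infimum via the mass--preserving scaling \eqref{mp_trans}, and then the compactness criterion of Proposition \ref{prop_compactness} (symmetric rearrangement plus the compact embedding of even, monotone $H^1$ functions into $L^r$) --- and obtains uniqueness separately, by solving the stationary problem \eqref{LSeq} with the matching condition \eqref{matching} explicitly and checking that the mass of the resulting one--parameter family $\omega\mapsto(2\sqrt\omega)^{1/(q-2)}e^{-\sqrt\omega|x|}$ determines $\omega$ uniquely. You instead collapse everything onto the sharp inequality \eqref{GNinf}: the bound $D_q(u)\ge g(\|u'\|_2)$ reduces the problem to one--variable calculus, the exponentials centred at the origin saturate every inequality in the chain, and uniqueness drops out of the rigidity of \eqref{GNinf} combined with $|u(0)|=\|u\|_\infty$. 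This is essentially the strategy the paper reserves for the critical case $q=4$ (Theorem \ref{THM D 2}), pushed into the subcritical range. What it buys you is an explicit value of $\DD_q(\mu)$ and the elimination of both the compactness argument and the Euler--Lagrange analysis; what it costs is that the whole proof leans on the equality cases of \eqref{GNinf}, which the paper only states in Section \ref{sec:prelim} and which you must prove in full (including the control of the phase and of the location of the maximum, since your uniqueness claim ranges over all of $\Hmu(\R)$ and not just over symmetric representatives), and that the method is specific to $D_q$: the paper's two--step scheme is the one that survives for the doubly nonlinear functional $F_{p,q}$, where no single sharp inequality is saturated by the minimizer. One practical remark: when you carry out the ``direct computation'' at the end, the optimal exponential comes out with amplitude $(\mu/2)^{1/(4-q)}$ and decay rate $2^{-2/(4-q)}\mu^{(q-2)/(4-q)}$ (these follow from $c^2=\mu\beta$ and $2\beta=c^{q-2}$); the exponents printed in the statement of the theorem agree with these only at $q=3$, so the displayed formula for $\chi_q$ appears to contain a typo and the mismatch is not a flaw in your argument.
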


The situation changes at $q=4$, as depicted in the following theorem.

\begin{thm}
	\label{THM D 2} Let $q=4$. Then ground states at mass $\mu$ exist if and only if $\mu=2$ and
	\begin{equation}
	\label{eq_THM D 2}
	\DD_4(\mu)=\begin{cases}
	0 & \text{if }\mu\leq2\\
	-\infty & \text{if }\mu>2
	\end{cases}
	\end{equation}
	Moreover, there exists a family $\{\chi_\lambda\}_{\lambda>0}\in H_2^1(\R)$ of positive ground states at mass $2$, given by
	\[
	\chi_\lambda(x):=\sqrt{2\lambda}e^{-\lambda|x|}\,,\qquad x\in\R\,.
	\]
\end{thm}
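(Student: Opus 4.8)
The plan is to reduce the whole statement to the sharp pointwise (trace) inequality
\[
|u(0)|^2\le \|u\|_{L^2(\R)}\,\|u'\|_{L^2(\R)}\,,\qquad u\in H^1(\R)\,,
\]
which I would prove by writing $u(0)^2=\int_{-\infty}^0\frac{d}{dx}(u^2)\dx=-\int_0^{\infty}\frac{d}{dx}(u^2)\dx$, averaging the two representations, and applying Cauchy--Schwarz to $\int_\R|u\,u'|$. The point is that the exponentials $ae^{-\lambda|x|}$ saturate this inequality, so that $\chi_\lambda$ is an extremal. On the constraint $\|u\|_{L^2(\R)}^2=\mu$ the inequality becomes $\frac14|u(0)|^4\le\frac\mu4\|u'\|_{L^2(\R)}^2$, and hence
\[
D_4(u)\ge\frac{2-\mu}{4}\,\|u'\|_{L^2(\R)}^2\,.
\]
This single estimate already exposes $\mu=2$ as the threshold.

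For $\mu\le2$ the right-hand side is nonnegative, so $D_4\ge0$ on $\Hmu(\R)$ and $\DD_4(\mu)\ge0$. For the matching upper bound I would exploit the exact $L^2$-criticality of the problem: under the mass-preserving scaling $u_\sigma(x)=\sigma^{1/2}u(\sigma x)$ the pointwise term $|u(0)|^4$ scales like the kinetic term, so $D_4(u_\sigma)=\sigma^2D_4(u)$. Letting $\sigma\to0$ for any fixed admissible $u$ drives $D_4(u_\sigma)\to0$, whence $\DD_4(\mu)\le0$ and therefore $\DD_4(\mu)=0$ for all $\mu\le2$. Attainment at $\mu=2$ then follows from a direct computation on the candidate family: one checks $\|\chi_\lambda\|_{L^2(\R)}^2=2$, $\|\chi_\lambda'\|_{L^2(\R)}^2=2\lambda^2$ and $|\chi_\lambda(0)|^4=4\lambda^2$, so that $D_4(\chi_\lambda)=0=\DD_4(2)$ for every $\lambda>0$, confirming that the whole family consists of positive ground states at mass $2$. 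For $\mu<2$, instead, the \emph{strict} factor $2-\mu>0$ forces any minimizer (which would have to make the right-hand side vanish) to satisfy $u'\equiv0$, i.e.\ to be constant, contradicting $u\in L^2(\R)\setminus\{0\}$; hence no ground state exists below mass $2$.

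For $\mu>2$ I would test the functional on the rescaled family $v=\sqrt{\mu/2}\,\chi_\lambda\in\Hmu(\R)$, which yields
\[
D_4(v)=\frac{\mu\lambda^2}{4}(2-\mu)\xrightarrow[\lambda\to\infty]{}-\infty\,,
\]
so that $\DD_4(\mu)=-\infty$ and ground states cannot exist. Collecting the three regimes gives the dichotomy \eqref{eq_THM D 2} together with existence exactly at $\mu=2$.

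The conceptual crux, rather than an analytic obstacle, is recognizing that $q=4$ is precisely the power for which the scaling exponent of $|u(0)|^4$ coincides with that of $\|u'\|_{L^2(\R)}^2$: this degeneracy makes the infimum entirely governed by the sharp constant in the trace inequality, and identifies the threshold mass $\mu=2$ as its reciprocal. Once the sharp constant and its extremals $\chi_\lambda$ are pinned down, every remaining step is routine; the only mildly delicate verification is that equality in the trace inequality is attained exactly along $\chi_\lambda$, which is what simultaneously fixes the value $\DD_4(2)=0$ and its attainment.
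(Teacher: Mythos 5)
Your proposal is correct and follows essentially the same route as the paper: the key lower bound $D_4(u)\ge\frac{2-\mu}{4}\|u'\|_{L^2(\R)}^2$ via the sharp inequality $|u(0)|^2\le\|u\|_{L^2(\R)}\|u'\|_{L^2(\R)}$ (the paper states it as $\|u\|_\infty^2\le\|u\|_{L^2(\R)}\|u'\|_{L^2(\R)}$ and reduces to $|u(0)|=\|u\|_\infty$ by rearrangement), the mass-preserving scaling to get $\DD_4(\mu)\le 0$, and testing with rescaled extremals $\sqrt{\mu/2}\,\chi_\lambda$ to get $-\infty$ for $\mu>2$. The only cosmetic difference is that you confirm the ground states at $\mu=2$ by direct computation on $\chi_\lambda$, whereas the paper derives them from the equality case of the Gagliardo--Nirenberg inequality; both suffice for the statement as written.
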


Notice that, when $q=4$, the threshold value of the mass $\mu=2$ appears and the infimum of the energy undertakes a sharp transition from 0 to $-\infty$ crossing this critical mass. Moreover, ground states exist only at the threshold. For this reason, we call $q=4$ the $L^2-$critical nonlinearity of the model. 

A similar behaviour is well--known (see \cite{cazenave}) in the case of the standard nonlinearity only, i.e. for the energy functional
\begin{equation*}
\label{E_intro}
E_p(u):=\f12\|u'\|_{L^2(\R)}^2-\f1p\|u\|_{L^p(\R)}^p\,,
\end{equation*}
for which the $L^2-$critical power is $p=6$. Indeed, in the $L^2-$subcritical setting $p\in(2,6)$, it has been shown that, for every $\mu>0$ there exists a unique (up to translations) ground state at mass $\mu$, the so--called soliton.
Conversely, at the critical power $p=6$, the infimum of the energy passes from 0 to $-\infty$ as the mass crosses the critical value $ \mu=\f{\sqrt{3}}2\pi$
and a family of solitons exists only at this threshold value.

Turning to the doubly nonlinear functional $F_{p,q}$, as one may expect, existence and uniqueness of ground states hold for every value of $\mu$ whenever $p\in(2,6)$ and $q\in(2,4)$, namely when both nonlinearities are subcritical. This is the content of the following theorem.
\begin{thm}[Doubly subcritical regime]
	\label{THM F 1}
	Let $2<p<6$ and $2<q<4$. Then, for every $\mu>0$, 
	\begin{equation}
	\label{eq_THM F 1}
	-\infty<\F_{p,q}(\mu)<0
	\end{equation}
	and there always exists a unique positive ground state at mass $\mu$.
\end{thm}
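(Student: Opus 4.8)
The plan is to establish \eqref{eq_THM F 1} in two steps and then treat existence and uniqueness separately, the latter being the genuinely delicate point. For the upper bound $\F_{p,q}(\mu)<0$ I would fix any nonzero $u\in\Hmu(\R)$ and test the functional along the mass-preserving dilations $u_\sigma(x):=\sqrt{\sigma}\,u(\sigma x)$, for which $\|u_\sigma'\|_{L^2}^2=\sigma^2\|u'\|_{L^2}^2$, $\|u_\sigma\|_{L^p}^p=\sigma^{\frac p2-1}\|u\|_{L^p}^p$ and $|u_\sigma(0)|^q=\sigma^{\frac q2}|u(0)|^q$. Since $2<p<6$ and $2<q<4$ force $\frac p2-1<2$ and $\frac q2<2$, both negative terms decay strictly slower than the kinetic one as $\sigma\to0^+$, so $F_{p,q}(u_\sigma)<0$ for $\sigma$ small, giving $\F_{p,q}(\mu)<0$. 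For the lower bound I would invoke the one-dimensional Gagliardo--Nirenberg inequalities $\|u\|_{L^p}^p\le C\|u'\|_{L^2}^{\frac p2-1}\mu^{\frac{p+2}{4}}$ and $|u(0)|^q\le\|u\|_\infty^q\le C\|u'\|_{L^2}^{\frac q2}\mu^{\frac q4}$: setting $t=\|u'\|_{L^2}$ this yields $F_{p,q}(u)\ge \frac12 t^2-A\,t^{\frac p2-1}-B\,t^{\frac q2}$ with $A,B>0$ depending only on $\mu$, and since both exponents are $<2$ the right-hand side is bounded below on $[0,\infty)$, whence $\F_{p,q}(\mu)>-\infty$.

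For existence I would argue by compactness after a rearrangement reduction. Given a minimizing sequence, replacing each term first by its modulus and then by its symmetric decreasing rearrangement $u_n^*$ does not increase the energy: indeed $\|(u_n^*)'\|_{L^2}\le\|u_n'\|_{L^2}$ by Pólya--Szegő, $\|u_n^*\|_{L^p}=\|u_n\|_{L^p}$ by equimeasurability, while $u_n^*(0)=\|u_n\|_\infty\ge|u_n(0)|$ so that the pointwise term only decreases. Hence I may assume the $u_n$ are nonnegative, even and nonincreasing in $|x|$; for such functions the elementary bound $u_n(x)^2\le\frac{\mu}{2|x|}$ provides uniform $L^p$-tightness at infinity, and together with the local compactness of $H^1$ it gives $u_n\to u$ strongly in $L^p(\R)$ and, through the compact embedding $H^1([-R,R])\hookrightarrow C([-R,R])$, also $u_n(0)\to u(0)$. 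The weak $H^1$ limit $u$ is nonzero, since otherwise $\liminf F_{p,q}(u_n)\ge0>\F_{p,q}(\mu)$. Writing $m=\|u\|_{L^2}^2\le\mu$, weak lower semicontinuity of the kinetic term and strong convergence of the two nonlinear terms give $F_{p,q}(u)\le\F_{p,q}(\mu)$, hence $\F_{p,q}(m)\le\F_{p,q}(\mu)$.

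To upgrade this to $m=\mu$ I would prove that $\mu\mapsto\F_{p,q}(\mu)$ is strictly decreasing. Given a minimizing sequence at mass $m$, I test each term with the dilation $v_n(x):=u_n(x/\lambda)$, $\lambda=\mu/m>1$, which keeps $v_n(0)=u_n(0)$, lowers the kinetic energy by the factor $\lambda^{-1}$ and raises the $L^p$ term by $\lambda$, so that $F_{p,q}(v_n)-F_{p,q}(u_n)=\frac{\lambda^{-1}-1}{2}\|u_n'\|_{L^2}^2-\frac{\lambda-1}{p}\|u_n\|_{L^p}^p$. Since $\liminf\|u_n'\|_{L^2}>0$ along a minimizing sequence (otherwise the negative terms would vanish, contradicting $\F_{p,q}(m)<0$), this produces a strict gap $\F_{p,q}(\mu)\le\F_{p,q}(m)-c$ with $c>0$. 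Consequently $m<\mu$ is impossible, so $u\in\Hmu(\R)$ is a minimizer; positivity then follows because $u$ is symmetric decreasing, solves \eqref{stat eq delta} with $\omega>0$ on $\R\setminus\{0\}$ as dictated by the decay at infinity, and the maximum principle (equivalently, uniqueness for the ODE Cauchy problem) forbids interior zeros.

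The main obstacle is uniqueness, which I would reduce to an ODE monotonicity statement. The equality cases of the rearrangement inequalities above force \emph{every} ground state to be, up to a constant phase, positive, even and strictly decreasing on $(0,\infty)$: equality in Pólya--Szegő identifies it with a translate of a symmetric decreasing profile, and $|u(0)|=\|u\|_\infty$ pins the center at the origin. On $(0,\infty)$ such a ground state solves $-u''+\omega u=u^{p-1}$ with $u(+\infty)=0$, and evenness together with the jump induced by the $\delta_0$-term in \eqref{stat eq delta} yields the boundary condition $u'(0^+)=-\tfrac12 u(0)^{q-1}$. The first integral $(u')^2=\omega u^2-\tfrac2p u^p$ (the constant being zero by decay), evaluated at $0^+$, then determines the frequency as an explicit function $\omega=\tfrac14 h^{2q-4}+\tfrac2p h^{p-2}$ of the height $h:=u(0)$, and integrating $dx=-du/\sqrt{\omega u^2-\frac2p u^p}$ expresses the mass as
\begin{equation}
\label{mass-height}
\mu=2\int_0^{h}\frac{u\,du}{\sqrt{\omega-\frac2p u^{p-2}}}=:M(h).
\end{equation}
Uniqueness is then equivalent to $M$ being strictly monotone in $h$, so that each mass $\mu$ selects a unique pair $(h,\omega)$ and hence a unique profile. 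Establishing this monotonicity — after the substitution $u=hs$ and differentiation in $h$, using $2<p<6$ and $2<q<4$ to sign the resulting integrand — is the technical heart of the argument and the step I expect to require the most care.
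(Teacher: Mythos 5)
Your treatment of \eqref{eq_THM F 1} and of existence follows essentially the paper's route: mass--preserving dilations $u_\sigma(x)=\sqrt\sigma\,u(\sigma x)$ for negativity, Gagliardo--Nirenberg for coercivity, and symmetric rearrangement plus the compact embedding of radial $H^1$ functions for convergence of a minimizing sequence. Your mechanism for excluding mass loss (strict monotonicity of $\mu\mapsto\F_{p,q}(\mu)$ via the dilation $u_n(\cdot/\lambda)$) differs from the paper's (if $0<\|u\|_2^2<\mu$ then $F_{p,q}(\beta u)<\beta^2F_{p,q}(u)<F_{p,q}(u)$ for the $\beta>1$ restoring the mass), but both are correct. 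A minor caveat: deducing that \emph{every} ground state is even and decreasing from the equality case of P\'olya--Szeg\H{o} is delicate in one dimension (one needs to rule out plateaus at intermediate levels); it is cleaner to argue, as the paper does, directly from the Euler--Lagrange system \eqref{NLSE}--\eqref{matching}, whose positive $H^1$ solutions are classified explicitly as two pasted half--solitons.

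The genuine gap is in the uniqueness step: you reduce it to the strict monotonicity of the mass--height map $M(h)$ in \eqref{mass-height} and then stop, explicitly deferring the verification. This is not a routine omission --- it is precisely the part to which the paper devotes most of its proof: there the profile is parametrized by $\omega$, the matching condition is solved implicitly for $\tbar(\omega)$, and showing $\f{d\mu}{d\omega}>0$ requires the Implicit Function Theorem plus a nontrivial sign analysis of an auxiliary function on $(0,1)$. Without this (or an equivalent) computation, uniqueness is simply not proved. That said, your parametrization by the height $h=u(0)$, with $\omega(h)=\tfrac14h^{2q-4}+\tfrac2p h^{p-2}$ determined by the first integral, actually closes the gap more cheaply than the paper's route: substituting $u=hs$ gives
\[
M(h)=2\int_0^1\frac{h^2\,s\,ds}{\sqrt{\tfrac14h^{2q-4}+\tfrac2p h^{p-2}\left(1-s^{p-2}\right)}}\,,
\]
and differentiating under the integral sign yields an integrand whose numerator is
\[
\frac{4-q}{4}\,h^{2q-3}+\frac{6-p}{p}\,h^{p-1}\left(1-s^{p-2}\right)\,,
\]
which is pointwise positive for $s\in(0,1)$ exactly because $2<q<4$ and $2<p<6$. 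Hence $M'(h)>0$, each mass selects a unique height, and uniqueness follows. You should carry out this computation explicitly; as written, the proposal asserts the key inequality rather than proving it.
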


Apparently, up to now no significant interaction between the two nonlinear terms takes place. This is no longer true when we take into account the critical powers. The interplay between a critical and a subcritical nonlinearity is unravelled in the next result.

\begin{thm}[Single critical regime]
	\label{THM F 2}
	Let $\mu>0$. 
	\begin{itemize}
		\item[(i)] If $p=6$ and $2<q<4$, then there exists a unique positive ground state at mass $\mu$ if and only if $\mu<\f{\sqrt{3}}2 \pi$, and
		\begin{equation}
		\label{eq_THM F 2}
		\begin{cases}
		-\infty<\F_{6,q}(\mu)<0 & \text{if }\mu<\f{\sqrt{3}}2 \pi\\
		\F_{6,q}(\mu)=-\infty & \text{if }\mu\geq\f{\sqrt{3}}2 \pi\,.
		\end{cases}
		\end{equation}
		\item[(ii)] If $2<p<6$ and $q=4$, then there exists a unique positive ground state at mass $\mu$ if and only if $\mu<2$
		\begin{equation}
		\label{eq_THM F 3}
		\begin{cases}
		-\infty<\F_{p,4}(\mu)<0 & \text{if }\mu<2\\
		\F_{p,4}(\mu)=-\infty & \text{if }\mu\geq2\,.
		\end{cases}
		\end{equation}
	\end{itemize}
\end{thm}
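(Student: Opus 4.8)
Throughout write $\mu^*:=\f{\sqrt3}2\pi$ for the threshold of the critical standard nonlinearity, and recall that $2$ is the analogous threshold for $D_4$ supplied by Theorem~\ref{THM D 2}. The guiding principle I would follow is that each \emph{critical} term forces its own threshold through the sharp inequality producing it, while the subcritical companion only perturbs the energy without displacing that threshold; below threshold, existence is then recovered precisely because this perturbation makes the infimum strictly negative. The plan is to first settle the thresholds and the nonexistence statements. In case (i), since $|u(0)|^q\ge0$ we have $F_{6,q}(u)\le E_6(u)$, hence $\F_{6,q}(\mu)\le\EE_6(\mu)$, where $\EE_6(\mu):=\inf_{\Hmu(\R)}E_6$; as $\EE_6(\mu)=-\infty$ for $\mu>\mu^*$ this already gives $\F_{6,q}(\mu)=-\infty$ there. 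At $\mu=\mu^*$ I would take the critical soliton $v$ (so that $E_6(v)=0$) and apply the mass-preserving dilation $v_\sigma(x):=\sigma^{1/2}v(\sigma x)$: then $E_6(v_\sigma)=\sigma^2E_6(v)=0$ while $|v_\sigma(0)|^q=\sigma^{q/2}|v(0)|^q\to\infty$ as $\sigma\to\infty$, so $F_{6,q}(v_\sigma)\to-\infty$ and $\F_{6,q}(\mu^*)=-\infty$. Case (ii) is identical after replacing $E_6$ by $D_4$ and $v$ by the explicit family $\chi_\lambda$ of Theorem~\ref{THM D 2}: from $F_{p,4}\le D_4$ one gets $\F_{p,4}(\mu)=-\infty$ for $\mu>2$, and since $D_4(\chi_\lambda)=0$ while $\|\chi_\lambda\|_{L^p}^p\to\infty$, one gets $\F_{p,4}(2)=-\infty$.

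Next I would prove finiteness and strict negativity below threshold. For the lower bound in (i), I combine the sharp Gagliardo--Nirenberg inequality $\f16\uLsix^6\le\f12(\mu/\mu^*)^2\udot^2$ (whose sharp constant is exactly what defines $\mu^*$) with the sharp trace bound $|u(0)|^2\le\udot\,\uLtwo$, to obtain $F_{6,q}(u)\ge\f12\big(1-(\mu/\mu^*)^2\big)\udot^2-\f1q\mu^{q/4}\udot^{q/2}$. For $\mu<\mu^*$ the leading coefficient is positive and $q/2<2$, so the right-hand side is bounded below as a function of $\udot$ and $\F_{6,q}(\mu)>-\infty$. In (ii) I use instead the sharp trace bound as $\f14|u(0)|^4\le\f12(\mu/2)\udot^2$ (which fixes the threshold $2$) together with subcritical Gagliardo--Nirenberg for $\uLp^p$, whose $\udot$-exponent $(p-2)/2$ is again $<2$. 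Strict negativity would follow from the same dilation applied to any admissible $u$ with $u(0)\neq0$ (resp. $\uLp>0$): the subcritical term scales like $\sigma^{q/2}$ (resp. $\sigma^{(p-2)/2}$), slower than the $\sigma^2$ of the other two terms, so $F(u_\sigma)<0$ for small $\sigma$ and $\F(\mu)<0$.

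For existence I would take a minimizing sequence and, via the P\'olya--Szeg\H{o} inequality, assume it positive and symmetric decreasing (this does not increase $F$, since rearranging raises $|u(0)|$ to $\uinfty$). The lower bound makes it bounded in $H^1(\R)$, so up to subsequences $u_n\deb u$ with local uniform convergence, whence $u_n(0)\to u(0)$. Vanishing is excluded by strict negativity: if $u(0)=0$ the pointwise term disappears in the limit and $\liminf F\ge0$ (via $\EE_6\ge0$, resp. the $D_4$ bound), contradicting $\F<0$. It then remains to exclude dichotomy, and by a Brezis--Lieb splitting $u_n=u+r_n$ (the single-point term passing to the limit) this reduces to the strict subadditivity $\F(\mu)<\F(m)+\mathcal J(\mu-m)$ for $0<m<\mu$, where $\mathcal J$ is the energy level of the problem at infinity, i.e. $\EE_6$ in (i) and $\EE_p$ in (ii). In case (i) one has $\mathcal J(\mu-m)=\EE_6(\mu-m)=0$ since $\mu-m<\mu^*$, so this collapses to strict monotonicity of $\mu\mapsto\F_{6,q}(\mu)$, which I prove by the amplitude scaling $u\mapsto cu$: for $c>1$, $F(cu)<c^2F(u)$ because $c^6,c^q>c^2$, whence $\F(c^2\mu)\le c^2\F(\mu)<\F(\mu)$. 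This forces $m=\mu$, and then $\|r_n\|_{L^2}\to0$ together with Gagliardo--Nirenberg gives $r_n\to0$ in $H^1$, so $u$ is a minimizer.

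The genuine obstacle is case (ii): here the escaping mass feels the subcritical $E_p$, whose infimum $\EE_p(\mu-m)$ is strictly negative, so monotonicity no longer suffices and the full strict subadditivity must be established. I would obtain the non-strict inequality by testing with a near-minimizer anchored at the origin together with a far-away soliton of $E_p$, and then upgrade it to strict using the scaling relation $\F_{p,4}(\theta\mu)\le\theta\F_{p,4}(\mu)$ for $\theta>1$, the strict superadditivity of the soliton curve $\nu\mapsto-\EE_p(\nu)$ (a power with exponent $>1$), and the strict gain $\F_{p,4}(\nu)<\EE_p(\nu)$ coming from placing the soliton at the origin; this interplay is the delicate point. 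Finally, for uniqueness I would reduce to the stationary equation: a positive ground state is symmetric decreasing and solves $u''=\omega u-u^{p-1}$ on $(0,\infty)$ with $u\to0$ at infinity and the jump $2u'(0^+)=-u(0)^{q-1}$ coming from the delta. Imposing that the first integral vanish at infinity and evaluating it at $0^+$ ties $u(0)$ to $\omega$; computing the mass by quadrature and checking its strict monotonicity in $\omega$ would then isolate a single admissible profile at each mass below threshold, which, combined with the rearrangement step, yields uniqueness.
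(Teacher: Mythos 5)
Your treatment of the thresholds, of coercivity below threshold, of strict negativity via the mass-preserving dilation, and of nonexistence at and above the critical mass all match the paper's proof in substance (the paper likewise tests with the optimizers of \eqref{GN6}, resp.\ \eqref{GNinf}, rescaled to mass $\mu$ and sent to $\lambda\to+\infty$). The divergence is in the existence step, and this is where your proposal has a genuine gap. You set up a concentration--compactness scheme and correctly observe that in case (ii) the problem at infinity is the subcritical NLS with $\EE_p(\nu)<0$, so that excluding dichotomy requires the full strict subadditivity $\F_{p,4}(\mu)<\F_{p,4}(m)+\EE_p(\mu-m)$ for $0<m<\mu$. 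But you do not prove this inequality: you list the ingredients (the scaling $\F_{p,4}(\theta\mu)\le\theta\F_{p,4}(\mu)$, superadditivity of $\nu\mapsto-\EE_p(\nu)$, the strict gain $\F_{p,4}(\nu)<\EE_p(\nu)$) and then declare their ``interplay'' to be ``the delicate point''. That is exactly the step that has to be carried out for the argument to close; as written, existence in case (ii) is not established. A similar, milder issue affects uniqueness: reducing to the stationary problem and to the injectivity of $\omega\mapsto\mu(\omega)$ is the right strategy (and the paper's), but the strict monotonicity of $\mu(\omega)$ is a nontrivial computation --- it occupies most of the uniqueness part of the proof of Theorem \ref{THM F 1}, via \eqref{massconstraint}, the Implicit Function Theorem applied to \eqref{tbarrel}, and the sign analysis of \eqref{dermu} --- and asserting that one would ``check'' it is not a proof.

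The gap is avoidable, and this is where the paper's route is both different and much lighter. Since you have already symmetrized the minimizing sequence, every $u_n$ is even and non-increasing on $(0,+\infty)$, hence $|u_n(x)|^2\le \mu/(2|x|)$; combined with the local compactness of $H^1\hookrightarrow C^{0,1/2}$ this gives strong convergence of $u_n$ to its weak limit $u$ in $L^r(\R)$ for every $r\in(2,\infty]$ (this is \cite[Proposition 1.7.1]{cazenave}, which is what drives Proposition \ref{prop_compactness}). Both nonlinear terms then pass to the limit, so dichotomy simply cannot occur; the only possible loss is $\|u\|_2^2<\mu$, which is excluded by the elementary scaling inequality $J(\beta u)<\beta^2 J(u)$ for $\beta>1$, exactly as in Proposition \ref{prop_compactness}. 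Once coercivity and strict negativity are known (which you have), this disposes of existence in cases (i) and (ii) simultaneously and makes the Brezis--Lieb splitting and the subadditivity inequality unnecessary.
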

First, notice that it is enough to consider one nonlinear term at its critical power to ensure the appearance of threshold phenomena. Moreover, the critical value of the mass is insensitive to the subcritical nonlinearity, as it corresponds to the threshold of the problem with the critical term only. On the other hand, the effect of the subcritical nonlinearity is evident in the range of masses smaller than the threshold, ensuring existence and uniqueness of ground states for all these values of $\mu$. Furthermore, the absence of ground states at the critical mass marks a difference with respect to the purely critical cases and suggests that the passage from boundedness to unboundedness from below
is smoothened by the presence of the subcritical power.

The last result concerns the doubly critical case, where simultaneously $p=6$ and $q=4$. Here we recover the typical structure of a purely critical setting, with the ground state energy level lifting from 0 to $-\infty$ when exceeding a critical value of the mass and solutions existing only at the threshold. A quite remarkable feature due to the interaction between the two nonlinearities is given by the fact that the critical mass \eqref{mustar} is lower than the critical masses $\f{\sqrt{3}}2$ and $2$ for the standard and pointwise nonlinearity.
\begin{thm}[Doubly critical regime]
	\label{THM F 4}
	Let $p=6$ and $q=4$. Then the functional \eqref{F_intro} admits ground states only at mass 
	\begin{equation}
	\label{mustar}
	\mu^*:=\sqrt{3}\left(\frac{\pi}{2}-\arcsin\left(\sqrt{\frac{3}{7}}\right)\right)
	\end{equation}
	and
	\begin{equation}
	\label{eq_THM F 4}
	\F_{6,4}(\mu)=\begin{cases}
	0 & \text{if }\mu\leq\mu^*\\
	-\infty & \text{if }\mu>\mu^*\,.
	\end{cases}
	\end{equation}
\end{thm}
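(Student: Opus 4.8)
The plan is to exploit the exact scaling invariance of $F_{6,4}$ in the critical regime, reduce to symmetric decreasing competitors, and then compress the whole statement into a single completion-of-squares identity that yields at once the threshold $\mu^*$, the energy levels and the ground states.

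First I would record the scaling structure. For $u\in\Hmu(\R)$ and $\lambda>0$ the rescaling $u_\lambda(x):=\lambda^{1/2}u(\lambda x)$ preserves the mass and satisfies $F_{6,4}(u_\lambda)=\lambda^2 F_{6,4}(u)$, since the kinetic term, the $L^6$ term and the pointwise term all carry the same homogeneity when $p=6$ and $q=4$. This forces the dichotomy $\F_{6,4}(\mu)\in\{0,-\infty\}$: letting $\lambda\to0$ gives $\F_{6,4}(\mu)\le0$ for every $\mu$, while a single competitor with $F_{6,4}<0$ gives $\F_{6,4}(\mu)=-\infty$ on sending $\lambda\to\infty$. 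Hence $\F_{6,4}(\mu)=0$ if and only if $F_{6,4}\ge0$ on all of $\Hmu(\R)$, and a ground state may exist only where $F_{6,4}\ge0$ and the value $0$ is attained. Next I would pass to the symmetric decreasing rearrangement $u^*$: this leaves $\|u\|_2$ and $\|u\|_6$ unchanged, does not increase $\|u'\|_2$ by the P\'olya--Szeg\H o inequality, and replaces $|u(0)|$ by $u^*(0)=\|u\|_\infty\ge|u(0)|$, so $F_{6,4}(u^*)\le F_{6,4}(u)$. It therefore suffices to study $F_{6,4}$ on even, positive, non-increasing profiles, and any ground state is of this form.

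The heart of the argument is the following identity. Given such a profile $u$ with peak $\alpha:=u(0)=\|u\|_\infty$ and mass $\mu$, set $\omega:=\frac{7}{12}\alpha^4$ and $g(s):=s\sqrt{\omega-\frac13 s^4}$, which is real on $[0,\alpha]$ because $\omega-\frac13\alpha^4=\frac14\alpha^4>0$. Writing $\tfrac12(u')^2-\tfrac16 u^6=\tfrac12\bigl(u'+g(u)\bigr)^2-u'g(u)-\tfrac{\omega}{2}u^2$ on $(0,\infty)$, integrating the exact derivative $u'g(u)=\frac{d}{dx}G(u)$ with $G(t):=\int_0^t g$, and using $\int_0^\infty u^2=\mu/2$, the explicit (arcsine) evaluation of $G(\alpha)$ collapses all $\alpha$-dependent boundary terms into the closed form
\[
F_{6,4}(u)=\int_0^\infty\bigl(u'+g(u)\bigr)^2\,dx+\frac{7\alpha^4}{24}\,(\mu^*-\mu),
\]
with $\mu^*$ exactly as in \eqref{mustar}. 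The decisive point is the choice $\omega=\frac{7}{12}\alpha^4$: it is the unique frequency for which the jump condition $u'(0^+)=-\frac12\alpha^3$ produced by the $\delta_0$ in \eqref{stat eq delta} is compatible with the first integral $(u')^2=\omega u^2-\frac13 u^6$ of the quintic soliton equation, so that $-g$ is precisely the velocity field of the admissible half-soliton and $-g(\alpha)=-\frac12\alpha^3$ matches the jump.

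From the identity the theorem follows immediately. If $\mu\le\mu^*$ both summands are nonnegative, so $F_{6,4}\ge0$ and $\F_{6,4}(\mu)=0$; when $\mu<\mu^*$ the inequality is strict for every nontrivial profile, so $0$ is not attained and no ground state exists. At $\mu=\mu^*$ equality holds exactly when $u'=-g(u)$, i.e. when $u'=-u\sqrt{\frac{7}{12}\alpha^4-\frac13 u^4}$ on $(0,\infty)$; these are the shifted quintic solitons, one for each $\alpha>0$ (equivalently each $\omega>0$), all of mass $\mu^*$, and they realise $\F_{6,4}(\mu^*)=0$, giving the announced family of ground states. Finally, for $\mu>\mu^*$ I would dilate the amplitude of such an extremal $\bar u$: since $F_{6,4}(\bar u)=0$, the function $c\mapsto F_{6,4}(c\bar u)$ pits negative terms of order $c^6$ and $c^4$ against a kinetic term of order $c^2$, so $F_{6,4}(c\bar u)<0$ for the unique $c>1$ with $\|c\bar u\|_2^2=\mu$, whence $\F_{6,4}(\mu)=-\infty$. \textbf{Main obstacle.} The genuinely delicate steps are the identification $\omega=\frac{7}{12}\alpha^4$ dictated by the pointwise jump condition and the exact integration of $G(\alpha)$, which is what manufactures the precise value \eqref{mustar}; the rearrangement reduction is enabled by the favourable behaviour of $|u(0)|^4$ under symmetrization, and this is the feature that allows the one-dimensional identity to control all of $\Hmu(\R)$.
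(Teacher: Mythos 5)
Your argument is correct; I checked the key identity. Writing $\omega=\frac{7}{12}\alpha^4$, $g(s)=s\sqrt{\omega-\frac13 s^4}$, $G(t)=\int_0^t g$, one indeed gets $G(\alpha)=\frac{\alpha^4}{8}+\frac{7\alpha^4}{48}\mu^*$ because $\arcsin(2/\sqrt7)=\frac\pi2-\arcsin(\sqrt{3/7})=\mu^*/\sqrt3$, and hence $F_{6,4}(u)=\int_0^\infty(u'+g(u))^2\dx+\frac{7\alpha^4}{24}(\mu^*-\mu)$ for every even, non-increasing $u\in\Hmu(\R)$ with peak $\alpha=u(0)$; the choice of $\omega$ is exactly the one compatible with the jump $u'(0^+)=-\frac12\alpha^3$, as you say. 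This is, however, a genuinely different route from the paper's. The paper proceeds softly: the scaling dichotomy plus a monotonicity lemma (Lemma \ref{lem1}) define $\mu^*$ as a supremum, upper semicontinuity gives $\F_{6,4}(\mu^*)=0$, a compactness argument on near-minimizers from masses above $\mu^*$ (Lemma \ref{lem2}) produces a ground state at $\mu^*$, Gagliardo--Nirenberg gives $\mu^*>0$, and the value of $\mu^*$ is then forced by the observation (via Proposition \ref{prop_uniq}) that for $p=6,\,q=4$ the matching parameter $\tbar=\sqrt{3/7}$ is independent of $\omega$, so \emph{all} stationary solutions share the same mass. Your Bogomolny-type factorization buys more with less machinery: it is quantitative (a lower bound $\frac{7\alpha^4}{24}(\mu^*-\mu)$ rather than just sign information), it dispenses with the compactness lemma and the semicontinuity argument entirely, it identifies the equality cases directly, and it gives non-existence below $\mu^*$ without invoking the Euler--Lagrange equation. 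What it costs is the ad hoc guess of the factorizing frequency and the exact arcsine integration, which is specific to this doubly critical case, whereas the paper's scheme recycles Proposition \ref{prop_uniq} already established for the other regimes. Two small points you should make explicit: the verification that the extremals of $u'=-g(u)$ have mass exactly $\mu^*$ is a second (routine, but necessary) arcsine computation, namely $\int_0^\alpha u\,(\omega-\frac13u^4)^{-1/2}\,du=\frac{\sqrt3}{2}\arcsin(2/\sqrt7)=\mu^*/2$ independently of $\alpha$; and the identity needs $\|u\|_\infty=u(0)$ for $g(u)$ to be real-valued, which is precisely what the rearrangement step supplies, so the reduction to symmetric profiles is not cosmetic but essential.
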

Note that equation \eqref{stat eq delta} corresponds to the stationary NLS equation
\begin{equation}
\label{NLSE}
u''+|u|^{p-2}u=\omega u\qquad\forall x\neq0
\end{equation} 
coupled with the nonlinear condition at the origin
\begin{equation}
\label{matching}
u'(0^-)-u'(0^+)=u(0)|u(0)|^{q-2}\,.
\end{equation}
Since the only positive $L^2-$solution of equation \eqref{NLSE} on $\R$ is the soliton 
\begin{equation}
\label{soliton}
\phi(x)=\left[\f p2\omega\left(1-\tanh^2\left(\f{p-2}2\sqrt{\omega}(|x|)\right)\right)\right]^{\f 1{p-2}}
\end{equation}
possibly translated, the ground states of \eqref{F_intro} can be constructed by pasting two pieces of soliton together, so that the matching condition \eqref{matching} is satisfied. In this way, one obtains
\[
u(x)=\left[\f p2\omega\left(1-\tanh^2\left(\f{p-2}2\sqrt{\omega}(|x|+a)\right)\right)\right]^{\f 1{p-2}}
\]
with $a$ given by the equation
\[
\f{2\tanh\left(\f{p-2}2\sqrt{\omega}a\right)}{\left[1-\tanh^2\left(\f{p-2}2\sqrt{\omega}a\right)\right]^{\f{q-2}{p-2}}}=\left(\f p2\right)^{\f{q-2}{p-2}}\omega^{\f{q-2}{p-2}-\f12}\,.
\]

\medskip
The remainder of the paper is organised as follows. In Section \ref{sec:prelim} we recall some preliminaries and discuss a general compactness argument. Section \ref{sec:D} deals with the purely pointwise nonlinear functional $D_q$, proving Theorems \ref{THM D 1}--\ref{THM D 2}, whereas Sections \ref{sec:subcrit}--\ref{sec:single_crit}--\ref{sec:F64} develop the analysis of the functional $F_{p,q}$. Specifically, Section \ref{sec:subcrit} treats the subcritical regime, exhibiting the proof of Theorem \ref{THM F 1}, while the discussion of the critical cases is given in Section \ref{sec:single_crit} for a single critical exponent (Theorem \ref{THM F 2}) and in Section \ref{sec:F64} for the case of both nonlinearities at the critical power (Theorem \ref{THM F 4}).

\medskip\noindent\textit{Notation.}	In what follows, we use symbols like $\uLp$ to denote $\|u\|_{L^p(\R)}$.

\section{Preliminaries and compactness}
\label{sec:prelim}

We begin here by revising some useful tools that will be helpful in the subsequent analysis. Particularly, we show that, both for  $D_q$ and $F_{p,q}$, if the infimum of the energy in $\Hmu(\R)$ is finite and strictly negative, then ground states at mass $\mu$ exist.

Before doing this, let us recall the well--known Gagliardo--Nirenberg inequality
\begin{equation}
\label{GNp}
\uLp^p\leq K_p \uLtwo^{\frac{p}{2}+1}\udot^{\frac{p}{2}-1}\,,\quad u\in H^1(\R),\,\,p\geq2,\,
\end{equation}
with $K_p>0$ the smallest constant for which the inequality is satisfied. Particularly, when $p=6$, \eqref{GNp} reads
\begin{equation}
	\label{GN6}
	\uLsix^6\leq K_6\uLtwo^4\udot^2
\end{equation}
with $K_6=\f4{\pi^2}$ (see \cite{dolbeaut}). Furthermore, equality in \eqref{GNp}--\eqref{GN6} is attained if and only if (up to translations, dilations and phase) $u$ is the soliton $\phi_\omega$ as in \eqref{soliton}.

When $p=+\infty$, the following version of the inequality  holds
\begin{equation}
	\label{GNinf}
	\uinfty^2\leq \uLtwo\udot\,,\quad u\in H^1(\R)\,.
\end{equation}
Moreover, equality in \eqref{GNinf} is realized if and only if (up to translations, dilations and phase) $u(x)=e^{-|x|}$, $x\in\R$.

\begin{remark}
	\label{rem_coercivity}
	Note that, for every $\mu>0$ and $2<q<4$, the coercivity of $D_q$ in $\Hmu(\R)$ is granted by inequality \eqref{GNinf}. Similarly, \eqref{GNp} and \eqref{GNinf} ensure that $F_{p,q}$ is coercive in $\Hmu(\R)$ for every $\mu>0$ and every $2<p<6$ and $2<q<4$.
\end{remark}

\begin{remark}
	\label{rem_symmetry}	
	By a standard rearrangement argument, it is readily seen that the minimization problems \eqref{infD_intro} and \eqref{infF_intro} can be restricted to the subspace of real, non--negative functions $u\in\Hmu(\R)$ that are even and non--increasing on $(0,+\infty)$. 
	Indeed, up to replacing $u$ with $|u|$, we can assume $u$ real and $u\geq0$. 
	Moreover, given a real $u\in\Hmu(\R)$, $u\geq0$, and denoted by $\widehat{u}\in\Hmu(\R)$ its symmetric rearrangement 
	\[
	\widehat{u}(x):=\inf\{\,t\geq0\,:\,|\{u>t\}|\leq2|x|\,\},\qquad x\in\R,
	\]
	we have
	\[
	\|\widehat{u}'\|_2\leq \udot,\quad \|\widehat{u}\|_p=\uLp\quad \text{and}\quad |\widehat{u}(0)|\geq|u(0)|\,,
	\]
	entailing
	\[
	D_q(\widehat{u})\leq D_q(u)\quad\text{and}\quad F_{p,q}(\widehat{u})\leq F_{p,q}(u)\,.
	\]
\end{remark}
We are now ready to discuss the following compactness criterion. Such a result is rather natural, and it exploits in our setting the well--known compactness of the embedding in $L^r$ of radial $H^1$ functions (see for instance \cite[Appendix A.II]{BL} and \cite[Section 1.7]{cazenave}).

\begin{proposition}
	\label{prop_compactness}
	Assume $J=D_q$ with $2<q<4$, or $J=F_{p,q}$ with $2<p<6,\,2<q<4$. Given $\mu>0$, let $\mathcal{J}(\mu):=\inf_{u\in\Hmu(\R)}J(u)$. If 
	\begin{equation}
	\label{J neg}
	-\infty<\mathcal{J}(\mu)<0
	\end{equation}
	then there exists a ground state of $J$ at mass $\mu$, i.e. $u\in\Hmu(\R)$ such that $J(u)=\mathcal{J}(\mu)$.
\end{proposition}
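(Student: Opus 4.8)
The plan is to run the direct method, exploiting the symmetry reduction and coercivity already recorded in Remarks~\ref{rem_symmetry} and~\ref{rem_coercivity}, and to recover compactness from the radial embedding, using the strict negativity in \eqref{J neg} only to prevent loss of mass in the weak limit. First I would fix a minimizing sequence $\{u_n\}\subset\Hmu(\R)$ for $J$, so that $J(u_n)\to\mathcal{J}(\mu)$. By Remark~\ref{rem_symmetry} I may assume each $u_n$ is real, non-negative, even and non-increasing on $(0,+\infty)$, so that $u_n(0)=\|u_n\|_\infty$. By Remark~\ref{rem_coercivity} the sequence is bounded in $H^1(\R)$, hence, up to a subsequence, $u_n\rightharpoonup u$ weakly in $H^1(\R)$ for some $u\in H^1(\R)$ with $\|u\|_2^2\le\mu$, by weak lower semicontinuity of the $L^2$ norm.

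Next I would identify the limits of the three terms of $J$. The kinetic part is weakly lower semicontinuous, so $\|u'\|_2^2\le\liminf_n\|u_n'\|_2^2$. For the power term, the compactness of the embedding of symmetric decreasing $H^1$ functions into $L^p(\R)$ with $p>2$ (see \cite[Appendix A.II]{BL} and \cite[Section 1.7]{cazenave}) yields $u_n\to u$ strongly in $L^p(\R)$, whence $\|u_n\|_p^p\to\|u\|_p^p$; concretely this follows by combining the uniform decay $u_n(x)^2\le\mu/(2|x|)$, valid for symmetric decreasing functions of mass $\mu$, with the $H^1$ bound to make the tails uniformly small, and Rellich's theorem on bounded intervals to treat the core. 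For the pointwise term I would use that the same $H^1$ bound makes $\{u_n\}$ equicontinuous, so $u_n\to u$ uniformly on compact sets, and in particular $u_n(0)\to u(0)$. Combining these facts gives $J(u)\le\liminf_n J(u_n)=\mathcal{J}(\mu)$. Moreover, since $J(u_n)\to\mathcal{J}(\mu)<0$ while the kinetic term is non-negative, the nonlinear terms stay bounded away from $0$, so in the limit $\frac{1}{p}\|u\|_p^p+\frac{1}{q}|u(0)|^q>0$ (respectively $\frac{1}{q}|u(0)|^q>0$ when $J=D_q$); in either case $u\not\equiv0$.

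The crux, and what I expect to be the main obstacle, is to upgrade $\|u\|_2^2\le\mu$ to the equality $\|u\|_2^2=\mu$, i.e. to rule out that mass escapes to infinity, which is the one quantity the radial embedding does not control. Here I would argue by contradiction using \eqref{J neg}: suppose $m:=\|u\|_2^2\in(0,\mu)$, set $t:=\mu/m>1$ and $v:=\sqrt{t}\,u\in\Hmu(\R)$. Under this rescaling $\|v'\|_2^2=t\|u'\|_2^2$, $\|v\|_p^p=t^{p/2}\|u\|_p^p$ and $|v(0)|^q=t^{q/2}|u(0)|^q$. Since $p>2$ and $q>2$ force $t^{p/2}>t$ and $t^{q/2}>t$, and since $J(u)\le\mathcal{J}(\mu)<0$ makes the nonlinear terms of $u$ strictly positive, I would obtain $J(v)<t\,J(u)<J(u)\le\mathcal{J}(\mu)$. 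This contradicts $J(v)\ge\mathcal{J}(\mu)$, so necessarily $m=\mu$.

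Finally, with $\|u\|_2^2=\mu$ the limit $u\in\Hmu(\R)$ is admissible, so $\mathcal{J}(\mu)\le J(u)\le\mathcal{J}(\mu)$, which shows $J(u)=\mathcal{J}(\mu)$ and hence that $u$ is a ground state of $J$ at mass $\mu$. The delicate point throughout is that strong convergence is only available in $L^p$ and at the origin, never in $L^2$; the hypothesis $\mathcal{J}(\mu)<0$ enters solely, but decisively, in the rescaling step that recovers the full mass.
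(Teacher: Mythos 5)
Your proof is correct and follows essentially the same route as the paper: symmetrized minimizing sequence, coercivity, weak convergence plus the compact radial embedding into $L^r$ for $r>2$, weak lower semicontinuity, and then the rescaling $u\mapsto\sqrt{\mu/m}\,u$ together with $\mathcal{J}(\mu)<0$ to exclude both vanishing and partial loss of mass. No substantive differences to report.
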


\begin{proof}
	Fix $\mu>0$ and let $\{u_n\}\subset\Hmu(\R)$ be a minimizing sequence for $J$. By Remark \ref{rem_symmetry}, $u_n$ can be taken non-negative, even and non--increasing on $(0,+\infty)$, for every $n$. By \eqref{J neg} and Remark \ref{rem_coercivity}, $\{u_n\}$ is bounded in $H^1(\R)$, so that (up to subsequences) $u_n\deb u$ in $H^1(\R)$, for some $u\in H^1(\R)$. Thus, by \cite[Proposition 1.7.1]{cazenave}, $u_n\to u$ strongly in $L^r(\R)$, for every $r\in(2,\infty]$. Then, by weak lower semicontinuity,
	\begin{equation}
	\label{wlsc_prop2}
	J(u)\leq\liminf_{n\to+\infty}J(u_n)=\mathcal{J}(\mu)\,.
	\end{equation}
	Suppose now $u\equiv0$. By \eqref{wlsc_prop2}, it then follows that $\mathcal{J}(\mu)\geq0$, contradicting \eqref{J neg}. Hence, $u\not\equiv0$ on $\R$.
	
	Moreover, if we assume $0<\uLtwo^2<\mu$, then there exists $\beta>1$ such that $\|\beta u\|_2^2=\mu$, so that $\beta u\in\Hmu(\R)$, and consequently, making use of the fact that $2<q<4$ when $J=D_q$ and $2<p<6,\,2<q<4$ when $J=F_{p,q}$,
	\[
	\mathcal{J}(\mu)\leq J(\beta u)<\beta^2J(u)<J(u)\leq\mathcal{J}(\mu),
	\]
	i.e. a contradiction again. Therefore $\uLtwo^2=\mu$, so that $u\in\Hmu(\R)$ and by \eqref{wlsc_prop2} $u$ is a ground state of $J$ at mass $\mu$.
\end{proof}

\section{Purely pointwise nonlinearity: the energy functional $D_q$}
\label{sec:D}

This section is devoted to the analysis of the energy functional involving only the pointwise nonlinearity $D_q$ as in \eqref{D_intro}. We address here both the subcritical regime $p\in(2,4)$ and the critical case $p=4$, proving Theorems \ref{THM D 1}--\ref{THM D 2}.

Recall that equation \eqref{Lstat delta} is equivalent to the equation
\begin{equation}
	\label{LSeq}
	u''=\omega u \quad\quad\forall x\neq0
\end{equation}
coupled with the matching condition \eqref{matching} at the origin.

\begin{proof}[Proof of Theorem \ref{THM D 1}]
	
	Let $\mu>0$ be fixed. We split the proof into two parts. 
	
	\medskip
	\noindent\textit{Part 1. Existence.} By Remark \ref{rem_coercivity}, $D_q$ is coercive for every $\mu>0$ and $2<q<4$, and therefore bounded from below in $\Hmu(\R)$.
	
	Moreover, given $u\in\Hmu(\R)$, the mass preserving transformation
	\begin{equation}
	\label{mp_trans}
	u_\lambda(x):=\sqrt{\lambda}u(\lambda x)\,,\qquad\lambda>0\,,
	\end{equation}
	 gives a family $\{u_\lambda\}\in\Hmu(\R)$ such that
	\[
	\DD_q(\mu)\leq D_q(u_\lambda)=\f{\lambda^2}2\udot^2-\f{\lambda^{\f q2}}q |u(0)|^q<0
	\]
	provided $\lambda$ is small enough. This proves \eqref{eq_THM D 1} and, by Proposition \ref{prop_compactness}, the proof is complete.
	
	\medskip
	\noindent\textit{Part 2. Uniqueness.} Recall that a ground states $u\in\Hmu(\R)$ of $D_q$ at mass $\mu$ is an even, non--increasing on $(0,+\infty)$, positive solution of the Cauchy problem \eqref{LSeq}. The only $H^1-$solutions to \eqref{LSeq} are
	\begin{equation}
	\label{u_sol_D}
	u(x)=(2\sqrt{\omega})^{\f1{q-2}}e^{-\sqrt{\omega}|x|}\,,\qquad x\in\R\,.
	\end{equation}
	Furthermore,
	\[
	\mu=(2\sqrt{\omega})^{\f2{q-2}}\int_\R e^{-2\sqrt{\omega}|x|}\,dx=2^{\f2{q-2}}\omega^{\f{4-q}{2(q-2)}}\,,
	\]
	
	\noindent thus showing for any given $\mu>0$ the existence of a unique $\omega$ for which \eqref{u_sol_D} is a solution of \eqref{LSeq} at mass $\mu$. Hence, the positive ground state of $D_q$ at mass $\mu$ is unique and it is given by
	\[
	u(x)=\left(\f\mu2\right)^{\f{q-2}{4-q}}\exp\left(-2^{-\f2{q-2}}\mu^{\f{q-2}{4-q}}|x|\right)\,,\qquad x\in\R\,.
	\]
\end{proof}

\begin{proof}[Proof of Theorem \ref{THM D 2}]
	
	Since, given $u\in\Hmu(\R)$,
	\[
	D_4(u)=\f12\udot^2-\f14|u(0)|^4\,,
	\]
	inequality \eqref{GNinf} implies
	\[
	D_4(u)\geq \f12\udot^2-\f14\mu\udot^2=\f12\udot^2\left(1-\f\mu2\right)\,,
	\]
	so that, if $\mu\leq2$, then 
	\[
	E(u)\geq0
	\]
	for every $u\in\Hmu(\R)$, the inequality becoming strict when $\mu<2$.
	
	Furthermore, considering $u_\lambda$ as in \eqref{mp_trans}, we get
	\[
	\DD_4(\mu)\leq\f{\lambda^2}2\udot^2-\f{\lambda^2}4|u(0)|^4=\lambda^2\left(\f12\udot^2-\f14|u(0)|^4\right)\to0
	\]
	as $\lambda\to0$, we conclude that $\DD_4(\mu)=0$ for every $\mu\leq2$.
	
	If $\mu<2$, then the fact that $E(u)>0$ for every $u\in\Hmu(\R)$ ensures that ground states at mass $\mu$ do not exist.
	
	Consider then $\mu=2$ and suppose that there exists a ground state  $u\in H_2^1(\R)$ of $D_4$ which, by Remark \ref{rem_symmetry}, can be taken non--negative, even and non--increasing on $(0,+\infty)$, so that $|u(0)|=\uinfty$. Then, $D_4(u)=\DD_4(2)=0$ leads to
	\[
	\uinfty^4=2\udot^2\,,
	\]
	that is, $u$ achieves equality in Gagliardo--Nirenberg inequality \eqref{GNinf}. By uniqueness of the optimizers of \eqref{GNinf} (see Section \ref{sec:prelim}), it follows that there exists a unique family $\{\chi_\lambda\}_{\lambda>0}\subset H_2^1(\R)$ of ground states of $D_4$ at mass $\mu=2$, given by
	\[
	\chi_\lambda(x):=\sqrt{2\lambda}e^{-\lambda|x|}\,,\qquad x\in\R\,.
	\]
	Finally, fix $\mu>2$ and let $u:=\sqrt{\f\mu2}\chi_1$, so that $u\in\Hmu(\R)$ and $u$ realises equality in \eqref{GNinf}
	\[
	\uinfty^2=\mu^{1/2}\udot\,.
	\]
	Thus
	\[
	D_4(u)=\f12\udot^2-\f14|u(0)|^4=\f12\udot^2\left(1-\f\mu2\right)<0\,,
	\]
	and letting $u_\lambda$ be as in \eqref{mp_trans} 
	\[
	\DD_4(\mu)\leq D_4(u_\lambda)=\lambda^2 D_4(u)\to-\infty\qquad\text{as }\lambda\to+\infty
	\]
	completing the proof of \eqref{eq_THM D 2}.
\end{proof}

\section{The energy functional $F_{p,q}$: the subcritical regime $2<p<6,\,2<q<4$}

In this section we begin the investigation of the model involving both the standard and the pointwise nonlinearity. 

Our aim here is to prove Theorem \ref{THM F 1}, focusing on the regime $2<p<6,\,2<q<4$ where both the nonlinearities are $L^2$--subcritical. 
First, we state the following preliminary result. 

\begin{prop}
	\label{prop_uniq}
	Let $p\in(2,6]$, $q\in(2,4]$. For every $\omega>0$, there exists a unique positive solution $\varphi_\omega\in H^1(\R)$ of \eqref{NLSE}--\eqref{matching}.
\end{prop}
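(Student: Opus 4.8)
The plan is to reduce the boundary-value problem \eqref{NLSE}--\eqref{matching} to a single algebraic matching equation, by first classifying all positive $H^1$ solutions on each half-line and then selecting those compatible with the jump condition at the origin. First I would observe that a positive solution $u\in H^1(\R)$ solves the autonomous equation $u''=\omega u-u^{p-1}$ separately on $(-\infty,0)$ and on $(0,+\infty)$, where positivity lets us write $|u|^{p-2}u=u^{p-1}$. Multiplying by $u'$ and integrating yields the conserved quantity $\tfrac12(u')^2-\tfrac\omega2 u^2+\tfrac1p u^p$ on each half-line; the $H^1$ decay $u,u'\to0$ at $\pm\infty$ forces this constant to vanish, so that $(u')^2=\omega u^2-\tfrac2p u^p$ along any such solution. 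This identity confines $(u,u')$ to the homoclinic orbit of the origin, whose only positive trajectories are the translates of the soliton. Hence $u=\phi(\cdot-x_0)$ on $(0,+\infty)$ and $u=\phi(\cdot-x_1)$ on $(-\infty,0)$ for suitable $x_0,x_1\in\R$, with $\phi$ the soliton in \eqref{soliton} (centred at the origin, even, strictly decreasing on $(0,+\infty)$, with maximum $\phi(0)=(p\omega/2)^{1/(p-2)}=:u_{\max}$).

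Next I would impose the two interface conditions. Continuity at $0$ gives $\phi(x_0)=\phi(x_1)$, and since $\phi$ is even and strictly decreasing on $[0,+\infty)$ this forces $|x_0|=|x_1|$, i.e. $x_1=x_0$ or $x_1=-x_0$. Using that $\phi'$ is odd, the jump in \eqref{matching} equals $u'(0^-)-u'(0^+)=\phi'(x_0)-\phi'(x_1)$, which must equal $u(0)^{q-1}=\phi(x_0)^{q-1}>0$. The choice $x_1=x_0$ produces the smooth translated soliton with zero jump, incompatible with a strictly positive right-hand side; thus $x_1=-x_0$, forcing $u$ to be even with $\phi'(x_0)>0$, i.e. $x_0<0$. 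Writing $a:=-x_0>0$ and $c:=u(0)=\phi(a)\in(0,u_{\max})$, and using $(u')^2=\omega u^2-\tfrac2p u^p$ to evaluate $|u'(0^+)|=\sqrt{\omega c^2-\tfrac2p c^p}$, the matching condition collapses to the scalar equation $2\sqrt{\omega c^2-\tfrac2p c^p}=c^{q-1}$, equivalently (after squaring and dividing by $c^2$)
\[
c^{2q-4}+\tfrac8p c^{p-2}=4\omega .
\]

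Finally I would settle existence and uniqueness of $c$. The left-hand side $g(c):=c^{2q-4}+\tfrac8p c^{p-2}$ is continuous and, since $q>2$ and $p>2$ keep both exponents strictly positive, strictly increasing on $(0,u_{\max}]$, with $g(0^+)=0$; moreover $u_{\max}^{p-2}=p\omega/2$ gives $g(u_{\max})=4\omega+u_{\max}^{2q-4}>4\omega$. By the intermediate value theorem there is a unique $c^*\in(0,u_{\max})$ solving $g(c)=4\omega$, which in turn determines a unique $a>0$ and hence a unique even positive solution; this is exactly the pasted soliton displayed after \eqref{matching}. The argument is uniform on the closed ranges $p\in(2,6]$, $q\in(2,4]$, since criticality ($p=6$ or $q=4$) only changes the values of the positive exponents $2q-4$ and $p-2$ without affecting the monotonicity of $g$.

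I expect the main obstacle to be the rigorous classification in the first step, namely proving that every positive $H^1$ solution on a half-line is precisely a translate of $\phi$ and ruling out any other behaviour (oscillation, or trajectories off the zero-energy level), together with the continuity-plus-jump bookkeeping that excludes non-symmetric gluings. Once the solution is pinned to the homoclinic orbit and the interface conditions are reduced to $g(c)=4\omega$, the remaining existence and uniqueness is a routine monotonicity argument.
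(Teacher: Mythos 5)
Your proposal is correct, and while the overall architecture (classify half-line solutions as translated solitons, use continuity plus the jump condition to force the symmetric gluing $u(x)=\phi_\omega(|x|+a)$ with $a>0$, then reduce to a scalar equation) coincides with the paper's, the final reduction is genuinely different and arguably cleaner. The paper parametrizes by $\tbar=\tanh(\sigma\sqrt{\omega}a)$ and must verify by an explicit derivative computation that $f(\tbar)=\tbar(1-\tbar^2)^{-\frac{q-2}{2\sigma}}$ is strictly increasing from $(0,1)$ onto $(0,+\infty)$, which is where the restriction $\sigma\in(0,2)$, i.e.\ $p\le 6$, quietly enters. You instead parametrize by $c=u(0)$ and exploit the zero-energy first integral $(u')^2=\omega u^2-\tfrac2p u^p$ to write $|u'(0^+)|$ in closed form, arriving at $c^{2q-4}+\tfrac8p\,c^{p-2}=4\omega$, whose left-hand side is manifestly strictly increasing because both exponents are positive; the only bookkeeping needed is that squaring is an equivalence on $(0,u_{\max}]$ (both sides nonnegative there) and that $c\mapsto a$ is a bijection since $\phi_\omega$ is strictly decreasing on $(0,+\infty)$, both of which you handle. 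The one step you rightly flag as needing care --- that every positive $H^1$ solution on a half-line lies on the homoclinic orbit, i.e.\ that $u,u'\to0$ at infinity forces the energy constant to vanish and excludes the interior equilibrium and oscillatory orbits --- is treated equally briefly in the paper (which simply invokes uniqueness of the positive $H^1$ solution on $\R$), so you are not assuming more than the authors do. Net effect: same theorem, same gluing argument, but your monotonicity step is more elementary and visibly uniform over $p\in(2,6]$, $q\in(2,4]$.
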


\begin{proof}
	
	If $u\in H^1(\R)$ is a solution of \eqref{NLSE}--\eqref{matching}, then by uniqueness of the solution of $u''+|u|^{p-2}u=\omega u$ on $\R$, it follows that $u$ is the restriction of suitable translations of the soliton $\phi_\omega$ both on $(-\infty,0)$ and on $(0,+\infty)$, i.e.
	\[
	u(x)=\begin{cases}
	\phi_\omega(x-a_-) & \text{if }x<0\\
	\phi_\omega(x-a_+) & \text{if }x\geq0
	\end{cases}
	\]
	for some $a_-,a_+\in\R$ to be determined.
	
	As a useful notation, we rewrite \eqref{soliton} in the form
	\begin{equation}
		\label{sol_s}
		\phi_\omega(x)=\left[(\sigma+1)\omega\left(1-\tanh^2\left(\sigma\sqrt{\omega}x\right)\right)\right]^{\f 1{2\sigma}}
	\end{equation}
	with $\sigma:=\f p2-1$, so that $\sigma\in(0,2)$.
	
	On the one hand, imposing $u$ to be continuous at $x=0$, we get $|a_-|=|a_+|$, so that we can write $a_\pm=\varepsilon_\pm a$, where $a>0$ and $\varepsilon_\pm:=\sgn(a_\pm)$.
	
	On the other hand, requiring $u$ to fulfil $u'(0^-)-u'(0^+)=u(0)|u(0)|^{q-2}$ we have, due to \eqref{sol_s},
	\begin{equation}
	\label{dercond}
	\sqrt{\omega}\tanh(\sigma\sqrt{\omega}a)(\epsilon_+-\epsilon_-)=-\phi_\omega(a)^{q-2},
	\end{equation}
	from which we deduce that $\epsilon_->\epsilon_+$. This forces $\epsilon_-=+1,\,\epsilon_+=-1$ and consequently 
	\[
	u(x)=
	\begin{cases}
	\phi_\omega(x-a)\quad x<0,\\
	\phi_\omega(x+a)\quad x\ge0\,.
	\end{cases}
	\]	
	Now, relying again on the explicit formula \eqref{sol_s}, relation \eqref{dercond} can be rewritten as
	\begin{equation}
	\label{tanhrel}
	\frac{\tanh(\sigma\sqrt{\omega}a)}{(1- \tanh^2(\sigma\sqrt{\omega}a))^{\frac{q-2}{2\sigma}}}=\frac{(\sigma+1)^{\frac{q-2}{2\sigma}}\omega^{\frac{q-2-\sigma}{2\sigma}}}{2}
	\end{equation}
	that is, setting $\tbar:=\tanh(\sigma\sqrt{\omega}a)$ and 
	\[
	f(\tbar):=\frac{\tbar}{(1-\tbar^2)^{\frac{q-2}{2\sigma}}}\,,
	\]
	we get
	\begin{equation}
	\label{tbarrel}
	f(\tbar)=\frac{(\sigma+1)^{\frac{q-2}{2\sigma}}\omega^{\frac{q-2-\sigma}{2\sigma}}}{2}.
	\end{equation} 
	Observing that $f((0,1))=(0,+\infty)$ and
	\begin{equation*}
	f'(t)=\frac{\frac{q-2-\sigma}{\sigma}t^2+1}{(1-t^2)^\frac{q-2+2\sigma}{2\sigma}}>0
	\end{equation*}
	for every $t\in(0,1)$, it follows that, for every frequency $\omega>0$, there exists a unique solution $\tbar$ of \eqref{tbarrel}. Since the correspondence between $\tbar$ and $a$ is one--to--one, we conclude.
\end{proof}

	We can now prove our main result in the case of both nonlinearities being subcritical.

\label{sec:subcrit}

\begin{proof}[Proof of Theorem \ref{THM F 1}]
	
	Fix $\mu>0$, $p\in(2,6)$ and $q\in(2,4)$. We divide the proof in two steps.
	
	\medskip
	\noindent\textit{Part 1. Existence.} Coercivity of $F_{p,q}$ in $\Hmu(\R)$ is guaranteed by Remark \ref{rem_coercivity}, so that $F_{p,q}$ is lower bounded in the mass constrained space. Furthermore, taking $u\in\Hmu(\R)$, $\lambda>0$ and letting $u_\lambda$ be as \eqref{mp_trans}, we get $u_\lambda\in\Hmu(\R)$ and
	\[
	\F_{p,q}(\mu)\leq F_{p,q}(u_\lambda)=\f{\lambda^2}2\udot^2-\f{\lambda^{\f p2-1}}p\uLp^p-\f{\lambda^{\f q2}}q|u(0)|^q<0\qquad \text{for }\lambda\to0\,.
	\]
	Established the negativity of $\F_{p,q}(\mu)$ as in \eqref{eq_THM F 1}, Proposition \ref{prop_compactness} ensures that ground states at mass $\mu$ exist.
	
	\medskip
	\noindent\textit{Uniqueness.} Let $u\in\Hmu(\R)$ be a ground state at mass $\mu$. Then, $u$ is a solution of \eqref{NLSE}--\eqref{matching} for a certain value of $\omega>0$, so that, by Proposition \ref{prop_uniq}, it corresponds to the unique solution $\varphi_\omega$ of \eqref{NLSE}--\eqref{matching}.
	
	Computing the mass of $\varphi_\omega$ and imposing $\varphi_\omega\in\Hmu(\R)$, we get
	\begin{equation}
	\label{massconstraint}
	\mu=2\frac{(\sigma+1)^\frac{1}{\sigma}\omega^{\frac{1}{\sigma}-\frac{1}{2}}}{\sigma}\int_{\tbar}^1(1-s^2)^{\frac{1}{\sigma}-1}\,ds
	\end{equation}
	where $\tbar=\tbar(\omega)$ is the unique solution of \eqref{tbarrel} and as usual $\sigma=\f p2-1$.
	
	Differentiating \eqref{massconstraint} with respect to $\omega$ yields at
	\[
	\f{d\mu}{d\omega}=\f{2(\sigma+1)^{\f1\sigma}}\sigma\left[\f{2-\sigma}{2\sigma}\omega^{\f1\sigma-\f32}\int_{\tbar}^1(1-s^2)^{\f1\sigma-1}\,ds-\omega^{\f1\sigma-\f12}(1-\tbar^2)^{\f1\sigma-1}\tbar'(\omega)\right]\,.
	\]
	Being $\tbar(\omega)$ the unique solution of \eqref{tbarrel}, by the Implicit Function Theorem it follows
	\[
	\tbar'(\omega)=\f{(\sigma+1)^{\f{q-2}{2\sigma}}}2\left(\f{q-2-\sigma}{2\sigma}\right)\omega^{\f{q-2-\sigma}{2\sigma}-1}\f{(1-\tbar^2)^{1+\f{q-2}{2\sigma}}}{\tbar^2\left(\f{q-2-\sigma}{\sigma}\right)+1}\,,
	\]
	leading to
	\[
	\begin{split}
	\f{d\mu}{d\omega}=\f{2(\sigma+1)^{\f1\sigma}}\sigma\omega^{\f1\sigma-\f32}&\left[\f{2-\sigma}{2\sigma}\int_{\tbar}^1(1-s^2)^{\f1\sigma-1}\,ds\right.\\
	-&\left.\f{(\sigma+1)^{\f{q-2}{2\sigma}}}2\left(\f{q-2-\sigma}{2\sigma}\right)\omega^{\f{q-2-\sigma}{2\sigma}}\f{(1-\tbar^2)^{\f q{2\sigma}}}{\tbar^2\left(\f{q-2-\sigma}{\sigma}\right)+1}\right]
	\end{split}
	\]
	and recalling \eqref{tanhrel} the second term in the square bracket can be further simplified to
	\begin{equation}
		\label{dermu}
		\f{d\mu}{d\omega}=\f{2(\sigma+1)^{\f1\sigma}}\sigma\omega^{\f1\sigma-\f32}\left[\f{2-\sigma}{2\sigma}\int_{\tbar}^1(1-s^2)^{\f1\sigma-1}\,ds-\f{\left(\f{q-2-\sigma}{2\sigma}\right)\tbar(1-\tbar^2)^{\f1\sigma}}{\tbar^2\left(\f{q-2-\sigma}{\sigma}\right)+1}\right]\,.
	\end{equation}
	Taking advantage of formula \eqref{dermu}, we now show that
	\begin{equation}
	\label{derneg}
	\f{d\mu}{d\omega}>0
	\end{equation}
	for every $\omega>0$, $\sigma\in(0,2)$ and $q\in(2,4)$. 
	
	If $q-2-\sigma\leq0$, then \eqref{derneg} is immediate, since $\tbar^2\left(\f{q-2-\sigma}\sigma\right)+1>0$ for every $\tbar\in(0,1)$ and the square bracket is the sum of two positive terms.
	
	Consider thus $q-2-\sigma>0$ and set for every $\tbar\in(0,1)$
	\[
	F(\tbar):=\f{2-\sigma}{2\sigma}\int_{\tbar}^1(1-s^2)^{\f1\sigma-1}\,ds-\f{\left(\f{q-2-\sigma}{2\sigma}\right)\tbar(1-\tbar^2)^{\f1\sigma}}{\tbar^2\left(\f{q-2-\sigma}{\sigma}\right)+1}\,.
	\]
	On the one hand, we have
	\[
	\lim_{\tbar\to0^+}F(\tbar)=\f{2-\sigma}{2\sigma}\int_0^1(1-s^2)^{\f1\sigma-1}\,ds>0\,,\qquad\lim_{\tbar\to1^-}F(\tbar)=0\,.
	\]
	On the other hand, differentiating $F$ with respect to $\tbar$ gives
	\[
	\begin{split}
	F'(\tbar)=-&\f{2-\sigma}{2\sigma}(1-\tbar^2)^{\f1\sigma-1}\\
	-&\f{(1-\tbar^2)^{\f1\sigma-1}}{2\sigma\left(\f{\tbar^2}\sigma+\f1{q-2-\sigma}\right)^2}\left[\f1{q-2-\sigma}-\f q{\sigma(q-2-\sigma)}\tbar^2-\f{2-\sigma}{\sigma^2}\tbar^4\right]\,,
	\end{split}
	\]
	and direct computations show that $F'(\tbar)<0$ for every $\tbar\in(0,1)$. Hence, $F(\tbar)$ is strictly positive in $(0,1)$ and so does $\f{d\mu}{d\omega}$, thus proving \eqref{derneg}.
	
	Finally, \eqref{derneg} implies that the mass $\mu$ of the ground state of $F_{p,q}$ in $\Hmu(\R)$ is a strictly increasing function of $\omega$. Therefore, for every $\mu>0$ there exists a unique $\omega>0$ such that $\varphi_\omega$ as in Proposition \ref{prop_uniq} is the required ground state.
\end{proof}

\section{The energy functional $F_{p,q}:$ the cases $p=6,\,2<q<4$ and $2<p<6,\,q=4$}
\label{sec:single_crit}

Throughout this section, we discuss the behaviour of the minimization problem \eqref{infF_intro} when one of the two nonlinearities is subcritical and the other is critical. Here follows the proof of Theorem \ref{THM F 2}, dealing with the cases $p=6,\,2<q<4$ and $2<p<6,\,q=4$.

\begin{proof}[Proof of Theorem \ref{THM F 2}]
	
	We begin by proving statement {\em(i)}. Given $q\in(2,4)$ and $\mu>0$, plugging \eqref{GN6} and \eqref{GNinf} into $F_{6,q}$ yields
	\[
	F_{6,q}(u)\geq\f12\udot^2-\f{K_6}6\mu^2\udot^2-\f1q\mu^{\f q4}\udot^{\f q2}=\f12\left(1-\f{K_6\mu^2}3\right)\udot^2-\f1q\mu^{\f q4}\udot^{\f q2}
	\]
	for every $u\in\Hmu(\R)$. Recalling the actual value of $K_6=\f4{\pi^2}$, we get that, if $\mu<\f{\sqrt{3}}2\pi$, the coefficient of $\udot^2$ in the right--hand side above is positive and, since $q\in(2,4)$, $F_{6,q}$ is coercive in $\Hmu(\R)$. Hence, 
	\[
	\F_{6,q}(\mu)>-\infty\,.
	\]
	Moreover, given $u\in\Hmu(\R)$ and taking $u_\lambda$ as in \eqref{mp_trans}, then $u_\lambda\in\Hmu(\R)$ for every $\lambda>0$ and
	\[
	\F_{6,q}(\mu)\leq F_{6,q}(u_\lambda)=\f{\lambda^2}2\udot^2-\f{\lambda^{2}}6\uLsix^6-\f{\lambda^{\f q2}}q|u(0)|^q<0\qquad\text{for }\lambda\to0\,.
	\]
	This proves the first part of \eqref{eq_THM F 2} and implies existence of ground states for every $\mu<\f{\sqrt{3}}2\pi$ due to Proposition \ref{prop_compactness}. Moreover, uniqueness of these ground states can be proved repeating the argument in the second part of the proof of Theorem \ref{THM F 1}, that works with no changes in the case $p=6,\,2<q<4$, too.
	
	Conversely, given $\mu\geq\f{\sqrt{3}}2\pi$, let $\{\phi_\lambda\}_{\lambda>0}\in H_{\f{\sqrt{3}}2\pi}^1(\R)$ be the family of critical solitons attaining equality in Gagliardo--Nirenberg inequality \eqref{GN6} at mass $\f{\sqrt{3}}2\pi$, so that
	\begin{equation}
		\label{opt_GN6}
		\|\phi_\lambda\|_6^6=\f4{\pi^2}\left(\f{\sqrt{3}}2\pi\right)^2\|\phi_\lambda'\|_2^2=3\|\phi_\lambda'\|_2^2\,.
	\end{equation}
	Setting $u_{\mu,\lambda}:=\sqrt{\f\mu{\f{\sqrt{3}}2\pi}}\phi_\lambda$, then, $u_{\mu,\lambda}\in\Hmu(\R)$ and using \eqref{opt_GN6}
	\[
	\begin{split}
	F_{6,q}(u_{\mu,\lambda})=&\f12\f\mu{\f{\sqrt{3}}2\pi}\|\phi_\lambda'\|_2^2-\f16\left(\f\mu{\f{\sqrt{3}}2\pi}\right)^3\|\phi_\lambda\|_6^6-\f1q\left(\f\mu{\f{\sqrt{3}}2\pi}\right)^{\f q2}|\phi_\lambda(0)|^q\\
	=&\f{\lambda^2}2\f\mu{\f{\sqrt{3}}2\pi}\|\phi_1'\|_2^2\left(1-\left(\f\mu{\f{\sqrt{3}}2\pi}\right)^2\right)-\f{\lambda^{\f q2}}q\left(\f\mu{\f{\sqrt{3}}2\pi}\right)^{\f q2}|\phi_\lambda(0)|^q\to-\infty
	\end{split}
	\]
	
	\noindent as $\lambda\to+\infty$, thus concluding the proof of \eqref{eq_THM F 2}.

	The proof of statement {\em(ii)} is analogous to the previous case.
	Indeed, let $\mu>0$, $p\in(2,6)$ and $q=4$. Plugging \eqref{GNp} and \eqref{GNinf} into $F_{p,4}$, we have, for every $u\in\Hmu(\R)$
	\[
	F_{p,4}(u)\geq\f12\left(1-\f\mu2\right)\udot^2-\f{K_p \mu^{\f p4+\f 12}}p\udot^{\f p4-\f12}
	\]
	
	\noindent so that, if $\mu<2$, then $F_{p,4}(u)\to+\infty$ as $\udot\to+\infty$ and the energy is coercive and lower bounded in $\Hmu(\R)$. Therefore, arguing as above, $\F_{p,4}(\mu)<0$ and ground states exist and are unique for every $\mu<2$.
	
	On the contrary, in the case $\mu\geq2$, let $\chi_\lambda(x)=\sqrt{2\lambda}e^{-\lambda|x|}$ be optimal in the Gagliardo--Nirenberg inequality \eqref{GNinf}, i.e.
	\[
	|\chi_\lambda(0)|^2=\|\chi_\lambda\|_\infty^2=\|\chi_\lambda\|_2\|\chi_\lambda'\|_2=\sqrt{2}\|\chi_\lambda'\|_2
	\]
	
	\noindent and set $u_{\mu,\lambda}:=\sqrt{\f\mu2}\chi_\lambda$. Thus, $u_{\mu,\lambda}\in\Hmu(\R)$ and it follows
	\[
	\begin{split}
	\F_{p,4}(\mu)&\leq F_{p,4}(u_{\mu,\lambda})=\f\mu4\|\chi_\lambda'\|_2^2-\f1p\left(\f\mu2\right)^{\f p2}\|\chi_\lambda\|_p^p-\f14\left(\f\mu2\right)^2|\chi_\lambda(0)|^4\\
	&=\f{\lambda^2\mu}4\|\chi'\|_2^2\left(1-\f\mu2\right)-\f{\lambda^{\f p2-1}}p\left(\f\mu2\right)^{\f p2}\|\chi\|_p^p\to-\infty
	\end{split}
	\]
	
	\noindent when $\lambda\to+\infty$, and \eqref{eq_THM F 3} is proved.
\end{proof}

\section{The energy functional $F_{6,4}$}
\label{sec:F64}

Here we give the proof of Theorem \ref{THM F 4}, dealing with the case of both nonlinearities at their corresponding critical powers. We preliminary notice that, defined $u_\lambda(x)=\sqrt{\lambda}u(\lambda x)$, one has
\begin{equation}
\label{l^2 F}
F_{6,4}(u_\lambda)=\lambda^2F_{6,4}(u)\,.
\end{equation}
As a consequence, letting $\lambda\to0$ in \eqref{l^2 F}, it follows $\F_{6,4}(\mu)\leq0$ for every $\mu>0$. Moreover, if there exists $v\in\Hmu(\R)$ such that $F_{6,4}(v)<0$, then as $\lambda\to+\infty$ we get $\F_{6,4}(\mu)=-\infty$.

\begin{lem}
	\label{lem1}
	Let $\mu_2>\mu_1>0$. 
	\begin{itemize}
		\item[(i)] if $\F_{6,4}(\mu_1)=-\infty$, then $\F_{6,4}(\mu_2)=-\infty$;
		\item[(ii)] if $\F_{6,4}(\mu_2)=0$, then $\F_{6,4}(\mu_1)=0$.
	\end{itemize}

	\begin{proof}
		Assume first that $\F_{6,4}(\mu_1)=-\infty$. Therefore, there exists $u\in H_{\mu_1}^1(\R)$ such that $F_{4,6}(u)<0$. Then, setting $v:=\sqrt{\f{\mu_2}{\mu_1}}u$, we have $v\in H_{\mu_2}^1(\R)$ and
		\[
		\begin{split}
		 F_{6,4}(v)=&\f12\f{\mu_2}{\mu_1}\udot^2-\f16\left(\f{\mu_2}{\mu_1}\right)^3\uLsix^6-\f14\left(\f{\mu_2}{\mu_1}\right)^2|u(0)|^4\\
		<&\f{\mu_2}{\mu_1}F_{6,4}(u)<0\,,
		\end{split}
		\]
		making use of the fact that $\mu_2/\mu_1>1$. We conclude that $\F_{6,4}(\mu_2)=-\infty$.
		
		The proof of statement \textit{(ii)} is analogue to the proof of {\em(i)}. Indeed, assuming by contradiction that $\F_{6,4}(\mu_1)\neq0$, then $\F_{6,4}(\mu_1)<0$. Hence, there exists $u\in H_{\mu_1}^1(\R)$ realizing strictly negative energy $F_{6,4}(u)<0$ and repeating the previous argument yields $\F_{6,4}(\mu_2)=-\infty$, a contradiction.
	\end{proof}
\end{lem}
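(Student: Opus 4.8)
The plan is to build everything on the exact critical scaling $F_{6,4}(u_\lambda)=\lambda^2 F_{6,4}(u)$ recorded in \eqref{l^2 F}, which I would first upgrade to the dichotomy that $\F_{6,4}(\mu)\in\{0,-\infty\}$ for every $\mu>0$: sending $\lambda\to0$ gives $\F_{6,4}(\mu)\le0$ always, while the existence of a single competitor of strictly negative energy already forces $\F_{6,4}(\mu)=-\infty$ via $\lambda\to+\infty$. Thus each mass sits on one side of a single threshold, and both items of the lemma are really the statement that this threshold is crossed monotonically in $\mu$.

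For item (i), I would assume $\F_{6,4}(\mu_1)=-\infty$; by the dichotomy there is $u\in H_{\mu_1}^1(\R)$ with $F_{6,4}(u)<0$. To transport this to mass $\mu_2$ I would use the mass-adjusting rescaling $v:=\sqrt{\mu_2/\mu_1}\,u\in H_{\mu_2}^1(\R)$ (no spatial dilation, only amplitude). Writing $t:=\mu_2/\mu_1>1$, the kinetic, sextic and pointwise terms acquire factors $t$, $t^3$ and $t^2$, so that
\[
F_{6,4}(v)=\frac{t}{2}\udot^2-\frac{t^3}{6}\uLsix^6-\frac{t^2}{4}|u(0)|^4 .
\]
Since $t>1$ yields $t^3\ge t$ and $t^2\ge t$ and both nonlinear terms enter with a minus sign, this gives $F_{6,4}(v)\le t\,F_{6,4}(u)<0$, so $v$ is a negative-energy competitor at mass $\mu_2$ and the dichotomy returns $\F_{6,4}(\mu_2)=-\infty$.

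For item (ii) I would argue by contraposition, leaning on (i). Suppose $\F_{6,4}(\mu_2)=0$ but $\F_{6,4}(\mu_1)\ne0$; then the dichotomy forces $\F_{6,4}(\mu_1)=-\infty$, and item (i) upgrades this to $\F_{6,4}(\mu_2)=-\infty$, contradicting $\F_{6,4}(\mu_2)=0$. Hence $\F_{6,4}(\mu_1)=0$.

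The argument is essentially bookkeeping, and the only point I would double-check is the comparison $F_{6,4}(v)\le t\,F_{6,4}(u)$: it hinges precisely on both negative terms scaling with powers of $t$ strictly above $1$ under the mass change, i.e. on $p=6$ and $q=4$ being the exponents for which the kinetic energy is linear in the mass while the two nonlinearities are strictly super-linear. This super-linearity is the whole mechanism, so I do not expect a genuine obstacle beyond verifying the sign of that inequality.
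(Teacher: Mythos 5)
Your proposal is correct and follows essentially the same route as the paper: the amplitude rescaling $v=\sqrt{\mu_2/\mu_1}\,u$ with the super-linear growth of the two nonlinear terms in the mass ratio gives $F_{6,4}(v)\le \frac{\mu_2}{\mu_1}F_{6,4}(u)<0$ for item (i), and item (ii) is the same contradiction/contraposition argument via the dichotomy $\F_{6,4}(\mu)\in\{0,-\infty\}$ already recorded before the lemma through the scaling \eqref{l^2 F}. No gaps.
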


Let us introduce
\begin{equation}
	\label{mu*}
	\mu^*:=\sup\{\,\mu\geq0\,:\,\F_{6,4}(\mu)=0\}\,.
\end{equation}

\begin{remark}
	\label{rem mu*}
	Suppose $\mu^*>0$. Then, by Lemma \ref{lem1} and definition of $\mu^*$, it follows 
	\begin{equation}
	\label{0 infty mu*}
	\begin{split}
	\F_{6,4}(\mu)=&\,0 \qquad\qquad\text{if }\mu<\mu^*\\
	\F_{6,4}(\mu)=&-\infty \,\,\,\qquad\text{if }\mu>\mu^*\,.
	\end{split}
	\end{equation}
	Furthermore, given $\mu>0$ and rewriting every $v\in\Hmu(\R)$ as $v=\sqrt{\mu}u$, for a suitable $u\in H_1^1(\R)$, it is readily seen that
	\[
	\F_{6,4}(\mu)=\inf_{u\in H_1^1(\R)}f_u(\mu)
	\]
	where, for every $u\in H_1^1(\R)$, we set
	\[
	f_u(\mu):=F_{6,4}(\sqrt{\mu}u)=\f\mu2\udot^2-\f{\mu^3}6\uLsix^6-\f{\mu^2}4|u(0)|^4\,.
	\]
	As $f_u(\mu)$ is a continuous function of $\mu$ for every fixed $u\in H_1^1(\R)$, $\F_{6,4}(\mu)$ is an upper semicontinuous function of the mass. By \eqref{0 infty mu*}, this entails
	\[
	\F_{6,4}(\mu^*)\geq\limsup_{n\to+\infty}\F_{6,4}(\mu_n)=0
	\]
	for every sequence of masses $\{\mu_n\}$ such that $\mu_n<\mu^*$ for all $n$, $\mu_n\to\mu^*$ as $n\to+\infty$. Since $\F_{6,4}(\mu)\leq0$ for every $\mu$, we then have $\F_{6,4}(\mu^*)=0$. 
\end{remark}
The next lemma guarantees that if $\mu^*$ is not equal to zero, then global minimizers of $F_{6,4}$ must exist at mass $\mu^*$.

\begin{lem}
	\label{lem2}
	If $\mu^*>0$, then ground states at mass $\mu^*$ exist, i.e. there exists $u\in H_{\mu^*}^1(\R)$ such that $F_{6,4}(u)=\F_{6,4}(\mu^*)$.
\end{lem}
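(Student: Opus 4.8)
The plan is to realize a ground state at mass $\mu^*$ as the weak limit of a carefully chosen sequence, using the fact — recorded in Remark \ref{rem mu*} — that $\F_{6,4}(\mu^*)=0$. The real obstacle is that the compactness argument of Proposition \ref{prop_compactness} is unavailable here precisely because $\F_{6,4}(\mu^*)=0$ rather than being strictly negative: a generic minimizing sequence at mass $\mu^*$ may \emph{spread}, i.e.\ satisfy $\|u_n'\|_2\to0$, in which case both nonlinear terms vanish and no nontrivial limit survives. My device to bypass this is to approach $\mu^*$ \emph{from above}, where $\F_{6,4}=-\infty$ provides competitors of strictly negative energy.

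First I would fix a sequence $\mu_n\downarrow\mu^*$ with $\mu_n>\mu^*$. By \eqref{0 infty mu*} we have $\F_{6,4}(\mu_n)=-\infty$, so for each $n$ there is $v_n\in H_{\mu_n}^1(\R)$ with $F_{6,4}(v_n)<0$. By Remark \ref{rem_symmetry} I may take $v_n$ non-negative, even and non-increasing, and, since the scaling $v\mapsto v_\lambda$ of \eqref{mp_trans} preserves the mass and multiplies $F_{6,4}$ by $\lambda^2$, I may further normalize $\|v_n'\|_2=1$ while keeping $F_{6,4}(v_n)<0$. Then $F_{6,4}(v_n)<0$ together with $\|v_n'\|_2=1$ reads
\[
\f16\|v_n\|_6^6+\f14|v_n(0)|^4>\f12,
\]
while the Gagliardo--Nirenberg inequalities \eqref{GN6}--\eqref{GNinf} bound this same quantity from above uniformly in $n$ (as $\mu_n$ is bounded). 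Hence $\{v_n\}$ is bounded in $H^1(\R)$, so up to subsequences $v_n\deb v$ in $H^1(\R)$ with $v_n\to v$ strongly in $L^r(\R)$ for every $r\in(2,\infty]$ by \cite[Proposition 1.7.1]{cazenave}.

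The strong $L^6$ and $L^\infty$ convergence gives $\f16\|v\|_6^6+\f14|v(0)|^4\geq\f12>0$, so $v\not\equiv0$: this is exactly where approaching from above pays off, since it forces the nonlinear part of the limit to be nontrivial and thereby excludes vanishing. Weak lower semicontinuity yields $\|v'\|_2\leq1$ and $\|v\|_2^2\leq\liminf\mu_n=\mu^*$; writing $m:=\|v\|_2^2\in(0,\mu^*]$ I obtain
\[
F_{6,4}(v)=\f12\|v'\|_2^2-\left(\f16\|v\|_6^6+\f14|v(0)|^4\right)\leq\f12-\f12=0.
\]
Were $F_{6,4}(v)<0$, then $\F_{6,4}(m)=-\infty$ and \eqref{0 infty mu*} would force $m>\mu^*$, a contradiction; hence $F_{6,4}(v)=0$, which also forces $\|v'\|_2=1$ and $\f16\|v\|_6^6+\f14|v(0)|^4=\f12$.

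It remains to show $m=\mu^*$, i.e.\ that no mass is lost, and here I would run the familiar superhomogeneity trick. For $t>1$, using $F_{6,4}(v)=0$ a direct computation gives
\[
F_{6,4}(\sqrt{t}\,v)=\f16\|v\|_6^6\,(t-t^3)+\f14|v(0)|^4\,(t-t^2)<0,
\]
the strict sign coming from $v\not\equiv0$, which makes at least one coefficient positive. Thus $\F_{6,4}(tm)=-\infty$, so $tm>\mu^*$ by \eqref{0 infty mu*}; letting $t\to1^+$ yields $m\geq\mu^*$, whence $m=\mu^*$. Therefore $v\in H_{\mu^*}^1(\R)$ satisfies $F_{6,4}(v)=0=\F_{6,4}(\mu^*)$ and is the sought ground state. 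In summary, the single delicate point is the non-vanishing of the weak limit, which the strict-negativity argument cannot secure at the critical mass; choosing the approximating sequence at masses slightly above $\mu^*$, with gradient normalized to $1$, is precisely what simultaneously rules out spreading and vanishing, after which the scaling identity \eqref{l^2 F} pins the mass at exactly $\mu^*$.
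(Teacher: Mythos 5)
Your proof is correct and follows essentially the same strategy as the paper's: take competitors at masses $\mu_n\downarrow\mu^*$ with nonpositive energy, normalize $\|v_n'\|_2=1$, pass to the weak limit via the radial compact embedding, rule out vanishing through the normalization, and rule out mass loss by the superhomogeneity of the nonlinear terms. The only cosmetic difference is that the paper selects competitors with $F_{6,4}(u_\mu)=0$ exactly (via an intermediate-value argument), whereas you take $F_{6,4}(v_n)<0$ and recover $F_{6,4}(v)=0$ a posteriori; this does not change the substance of the argument.
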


\begin{proof}
	By Remark \ref{rem mu*}, $\F_{6,4}(\mu^*)=0$ and $\F_{6,4}(\mu)=-\infty$ for every $\mu>\mu^*$. Therefore, given $\mu>\mu^*$, the continuity of $F_{6,4}$ and the connection of $\Hmu(\R)$, there exists $u_\mu\in\Hmu(\R)$ such that $F_{6,4}(u_\mu)=0$. Moreover, up to a mass preserving transformation and without loss of generality, we can further assume $\|u_\mu'\|_2=1$ and $u_\mu$ even and non--increasing on $(0,+\infty)$. Hence, $\{u_\mu\}$ is bounded in $H^1(\R)$ and (up to subsequences) $u_\mu\deb u$ in $H^1(\R)$ and $u_\mu\to u$ in $L_{loc}^\infty(\R)$, for some $u\in H^1(\R)$. By \cite[Proposition 1.7.1]{cazenave} $u_\mu\to u$ strongly $L^p(\R)$ for every $p>2$ so that, coupled with weak lower semicontinuity, 
	
	\begin{equation}
	\label{eq_lem2}
	F_{6,4}(u)\leq\liminf_{\mu\to\mu^*} F_{6,4}(u_\mu)=0\,.
	\end{equation}
	Suppose now $u\equiv0$ on $\R$. Then, $F_{6,4}(u_\mu)=0$, with $u_\mu$ even, non--increasing on $(0,+\infty)$ and $u_\mu\to0$ in $L^\infty(\R)$ imply 
	\[
	\f12\|u_\mu'\|_2^2=\f16\|u_\mu\|_6^6+\f14|u_\mu(0)|^4\to0\qquad\text{as }\mu\to\mu^*\,,
	\]
	contradicting $\|u_\mu'\|_2=1$. Thus, $u\not\equiv0$.
	
	Finally, let $m:=\uLtwo^2$ and suppose $0<m<\mu^*$. Being $\F_{6,4}(m)=0$ and by \eqref{eq_lem2}, $u$ is a ground state at mass $m<\mu^*$ and $F_{6,4}(u)=0$. Therefore, setting $v=\sqrt{\f{\mu^*}{m}}u$, we get $v\in H_{\mu^*}^1(\R)$ realizing
	\[
	F(v)=F\left(\sqrt{\f{\mu^*}{m}}u\right)<\f{\mu^*}m F(u)=0
	\]
	since $\mu^*/m>1$. This is impossible, since in the first part of the proof we already showed that $\F_{6,4}(\mu^*)=0$. Thus, $u\in H_{\mu^*}^1(\R)$ and $F_{6,4}(u)=0$ by \eqref{eq_lem2}, that is $u$ is a ground state at mass $\mu^*$.
\end{proof}

We can now prove Theorem \ref{THM F 4}.

\begin{proof}[Proof of Theorem \ref{THM F 4}]
	On the one hand, plugging \eqref{GN6}--\eqref{GNinf} into the energy, we have
	\[
	F_{6,4}(u)\geq\f12\udot^2-\f4{6\pi^2}\mu^2\udot^2-\f14\mu\udot^2=\f12\udot^2\left(1-\f4{3\pi^2}\mu^2-\f\mu2\right)
	\]
	thus showing that $F_{6,4}(u)>0$ for every $u\in\Hmu(\R)$, provided $\mu$ is small enough. Hence, $\mu^*>0$ and, by Remark \ref{rem mu*} we get \eqref{eq_THM F 4}.
	
	On the other hand, ground states of $F_{6,4}$ in $\Hmu(\R)$ are solutions of \eqref{NLSE}--\eqref{matching} for some Lagrange multiplier $\omega>0$, so that, by Proposition \ref{prop_uniq}, they must correspond to certain $\varphi_\omega\in H^1(\R)$. Moreover, if $p=6$ and $q=4$, equation \eqref{tbarrel} simply becomes
	\[
	\f\tbar{\sqrt{1-\tbar^2}}=\f{\sqrt{3}}2\,,
	\] 
	
	\noindent that is $\tbar=\sqrt{\f37}$. Hence, computing explicitly the mass of $\varphi_\omega$ gives
	\[
	\mu=\sqrt{3}\int_{\sqrt{\f37}}^1(1-s^2)^{-\f12}\,ds=\sqrt{3}\left(\f\pi2-\text{arcsin}\left(\sqrt{\f37}\right)\right)\,,
	\] 
	
	\noindent which implies that, regardless of $\omega$, all solutions of \eqref{NLSE}--\eqref{matching} share the same value of the mass. Since, by Lemma \ref{lem2}, ground states at mass $\mu^*$ must exist, we conclude that 
	\[
	\mu^*=\sqrt{3}\left(\f\pi2-\text{arcsin}\left(\sqrt{\f37}\right)\right)
	\]
	
	\noindent and ground states exist if and only if $\mu=\mu^*$. In fact, a direct computation shows that $F_{6,4}(\varphi_\omega)=0$ for every $\omega>0$, so that all solutions $\{\varphi_\omega\}_{\omega>0}$ of \eqref{NLSE}--\eqref{matching} are ground states at the critical mass.
\end{proof}

\end{document}